\DeclarePairedDelimiter{\abs}{\lvert}{\rvert}
\DeclareMathOperator{\res}{res}
\DeclareMathOperator{\tr}{tr}
\DeclareMathOperator{\nm}{nm}
\DeclareMathOperator{\conj}{c}
\DeclareMathOperator{\Spec}{Spec}
\newcommand{\uA}{\underline{A}} 
\newcommand{\burn}{\uA} 
\newcommand{\burnghost}[1][G]{\ghost(\burn_{#1})}
\newcommand{\ghostmap}{\chi} 
\newcommand{\mf}[1]{\mathfrak{#1}}
\newcommand{\conjugate}[1]{\sim_{#1}} 
\newcommand{\n}[2]{n_{#1,#2}} 
\newcommand{\NN}{\mathbb{N}}
\newcommand{\ZZ}{\mathbb{Z}}
\newcommand{\I}{\mathcal{I}}
\newcommand{\K}{\mathcal{K}}
\let\P=\relax
\newcommand{\P}{\mathcal{P}}
\newcommand{\sr}[1]{\mathscr{#1}}
\newcommand{\fr}[1]{\mathfrak{#1}}
\newcommand{\Sets}{\mathsf{Set}}
\newcommand{\Fin}{\mathsf{Fin}}
\numberwithin{equation}{section} 
\numberwithin{figure}{section}
\crefname{lemma}{Lemma}{Lemmas}
\crefname{theorem}{Theorem}{Theorems}
\crefname{definition}{Definition}{Definitions}
\crefname{proposition}{Proposition}{Propositions}
\crefname{remark}{Remark}{Remarks}
\crefname{corollary}{Corollary}{Corollaries}
\crefname{equation}{Equation}{Equations}
\crefname{construction}{Construction}{Constructions}
\crefname{ex}{Example}{Examples}
\crefname{appsec}{Appendix}{Appendices}
\crefname{subsection}{Subsection}{Subsections}
\theoremstyle{plain}
\newtheorem{theorem}[equation]{Theorem}
\newtheorem{corollary}[equation]{Corollary}
\newtheorem{proposition}[equation]{Proposition}
\newtheorem{lemma}[equation]{Lemma}
\newtheorem*{theorem*}{Theorem}
\theoremstyle{definition}
\newtheorem{definition}[equation]{Definition}
\newtheorem{example}[equation]{Example}
\newtheorem{remark}[equation]{Remark}
\newtheorem{notation}[equation]{Notation}
\begin{document}

\title[Spectrum of the Burnside Functor]{The spectrum of the Burnside Tambara functor}

\author[M.E.\ Calle]{Maxine Elena Calle}
\address[Calle]{$\mathghost$ Department of Mathematics,
         University of Pennsylvania,
         Philadelphia, PA, U.S.A.}\email{\href{mailto:callem@sas.upenn.edu}{callem@sas.upenn.edu}}

\author[D.\ Chan]{David Chan}
\address[Chan]{$\bigpumpkin$
Department of Mathematics,
         Michigan State University,
         East Lansing, MI
         U.S.A.}
\email{\href{mailto:chandav2@msu.edu}{chandav2@msu.edu}}

\author[D.\ Mehrle]{David Mehrle}
\address[Mehrle]{$\xleftflutteringbat{}$
        Department of Mathematics,
        University of Kentucky, 
        Lexington, KY, U.S.A.}
\email{\href{mailto:davidm@uky.edu}{davidm@uky.edu}}

\author{J.D. Quigley}
\address[Quigley]{$\bigskull$ Department of Mathematics, University of Virginia, Charlottesville, VA, U.S.A.}
\email{\href{mailto:mbp6pj@virginia.edu}{mbp6pj@virginia.edu}}

\author[B.\ Spitz]{Ben Spitz}
\address[Spitz]{$\xleftswishingghost{}$ Department of Mathematics, Indiana University, Bloomington, IN, U.S.A.}
\email{\href{mailto:bespitz@iu.edu}{bespitz@iu.edu}}

\author[D.\ Van Niel]{Danika Van Niel}
\address[Van Niel]{$\mathwitch$ Department of Mathematics and Statistics, Binghamton University, Binghamton, NY, U.S.A.}
\email{\href{mailto:dvanniel@binghamton.edu}{dvanniel@binghamton.edu}}

\keywords{Burnside Tambara functor, prime ideal, Nakaoka spectrum}

\renewcommand{\subjclassname}{\textup{2020} Mathematics Subject Classification}
\subjclass[2020]{19A22, 
55P91, 
13A15 
}

\begin{abstract}
Tambara functors are an equivariant generalization of commutative rings. In previous work, Nakaoka introduced the spectrum of prime ideals of a Tambara functor and computed the spectrum of the Burnside Tambara functor, the equivariant analogue of the Zariski spectrum of the integers, over cyclic $p$-groups. Subsequently, Calle and Ginnett computed the spectrum of the Burnside Tambara functor over arbitrary finite cyclic groups using a generalization of Dress' ghost coordinates for Burnside rings. 

In this paper, we compute the spectrum of prime ideals in the Burnside Tambara functor over an arbitrary finite group. Our proof uses recent advances in the commutative algebra of Tambara functors, as well as a Tambara functor analogue of ghost coordinates which works over arbitrary finite groups and clarifies some previous computations. As examples, we explicitly compute the spectrum of the Burnside Tambara functor over all dihederal groups,  the quaternion group $Q_8$, the alternating group $A_4$, and the general linear group $GL_3(\mathbb{F}_2)$. 
\end{abstract}

\maketitle

\setcounter{tocdepth}{1}
\tableofcontents

\section{Introduction}

Tambara functors describe multiplicative structures in equivariant stable homotopy theory \cite{BH2015,ElmantoHaugseng} and can be studied in a similar way to ordinary commutative rings --- one can study their ideals \cite{nakaoka:2012}, various kinds of modules \cite{strickland:2012,hill:2017}, and other algebraic structures. The analogue of the ring of integers in this context is called the Burnside Tambara functor, which is the initial object in the category of Tambara functors. Recent progress in equivariant stable homotopy theory, for instance \cite{HHR16, 6AP}, has increased interest in the Burnside Tambara functor since it also arises as the zeroth stable homotopy group of the equivariant sphere spectrum. 

In \cite{nakaoka:2012}, Nakaoka defined prime ideals in Tambara functors. These prime ideals assemble into the Nakaoka spectrum, an equivariant analogue of the Zariski spectrum from classical algebraic geometry. In this paper, we compute the prime ideals in the Burnside Tambara functor over any finite group. This spectrum has previously been computed over cyclic $p$-groups by Nakaoka \cite{nakaoka:2014} and arbitrary cyclic groups in \cite{CalleGinnett:23}.

\begin{theorem}\label{main theorem}
    Let $G$ be a finite group. The Nakaoka spectrum $\Spec(\burn_G)$ of prime ideals in the Burnside $G$-Tambara functor $\burn_G$ is given by
    \[
    	\Spec(\burn_G) = \{\fr{p}_{H,p}\mid H\leq G,\ p \in \mathbb{Z} \text{ prime or } 0\}.
    \]
    The containments between these ideals are listed in \Cref{thm:containment all}. 
\end{theorem}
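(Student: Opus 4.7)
The plan is to mimic Dress's classical computation of $\Spec A(G)$ one subgroup at a time, and then use the Tambara (Mackey plus norm) structure to glue the pieces together.

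First, I would construct the candidate primes. For each $H \leq G$ and each prime $p$ (or $p=0$), define a Nakaoka--Tambara ideal $\fr{p}_{H,p} \subseteq \burn_G$ levelwise: on each transitive $G$-set $G/K$, set
\[
    \fr{p}_{H,p}(G/K) \;=\; \bigcap_{L} \ker\!\bigl(\phi_L \colon \burn_G(G/K) \to \mathbb{F}_p\bigr),
\]
where $L$ ranges over subgroups of $K$ satisfying the Dress condition relative to $H$ (the quotient of $L$ by its largest $p$-perfect normal subgroup is $G$-conjugate to a subgroup of $H$, with the obvious modification for $p=0$) and $\phi_L$ is the mark homomorphism sending a $K$-set $X$ to $|X^L|$. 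Each quotient $\burn_G(G/K)/\fr{p}_{H,p}(G/K)$ embeds into a field of characteristic $p$, so each component is a prime ideal. I would then check that these levelwise data assemble into a Tambara ideal: restriction compatibility is immediate, additive closure under transfers follows from Frobenius reciprocity, and the most delicate point is closure under norms, which uses the multiplicativity of norms together with the exponential formula describing their interaction with mark maps. Primality in Nakaoka's sense then follows from primality at each component.

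The main obstacle is the completeness statement: every Nakaoka prime $\fr{q} \subseteq \burn_G$ has the form $\fr{p}_{H,p}$. At each level $K$, $\fr{q}(G/K)$ is a prime ideal of $\burn_G(G/K) = A(K)$, so by Dress it has the shape $\fr{p}^{K}_{L_K,\, p_K}$ for some subgroup $L_K \leq K$ (up to $K$-conjugacy) and some prime-or-zero $p_K$. Restriction compatibility quickly forces $p_K$ to be a single $p$ across all levels and constrains the family $(L_K)_K$ to be coherent. The technical heart is showing that this family is controlled by a \emph{single} $G$-conjugacy class of subgroups $H \leq G$: this is where the Tambara structure is essential, since the norm maps must be used to rule out incoherent combinations of Dress data that would otherwise be consistent merely as a Mackey ideal. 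I expect this rigidity to proceed by induction on $|G|$, using the ghost map $\ghostmap$ to track how prime ideals propagate through the subgroup lattice. Once the classification is in hand, the containment poset recorded in \cref{thm:containment all} drops out by direct comparison of the levelwise kernels together with Dress's containments within each $\Spec A(K)$.
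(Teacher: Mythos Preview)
There is a genuine gap. Your completeness argument hinges on the claim that for any prime Tambara ideal $\fr{q}\subseteq \burn_G$, each level $\fr{q}(G/K)$ is a prime ideal of the ring $A(K)$, so that Dress's classification pins it down as some $\fr{p}^K_{L_K,p_K}$. This is false. Already for $G=C_2$ and any prime $q\nmid |G|$, the ideal $\fr{p}_{C_2,q}(C_2/C_2)=\ker(\varphi^{e}_{C_2,q})\cap\ker(\varphi^{C_2}_{C_2,q})=qA(C_2)$ is not prime: the quotient $A(C_2)/q\cong \mathbb{F}_q[t]/(t^2-2t)$ has zero divisors. In general $\fr{p}_{H,p}(G/K)$ is an \emph{intersection} of Dress primes, hence only radical, and the same is true for the levels of an arbitrary prime Tambara ideal. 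The related assertion in your construction step, that the quotient $A(K)/\fr{p}_{H,p}(G/K)$ embeds in a field, is wrong for the same reason: it embeds in a product of fields. Thus neither ``levelwise prime $\Rightarrow$ Nakaoka prime'' nor ``Nakaoka prime $\Rightarrow$ levelwise prime'' is available, and both directions of your argument collapse.

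The paper avoids this obstacle entirely by passing to the ghost $\burnghost$, where each level is a product $\prod_{I\leq H}\ZZ$ and ideals are tuples of principal ideals of $\ZZ$. There one can prove directly (\cref{lem:componentwise prime}, \cref{prop:value indep of lvl}) that a prime Tambara ideal is componentwise $0$, $1$, or a prime $p$, and that these values are governed by a family of subgroups; \cref{thm:prime iff subgp fam} then forces the family to come from a single conjugacy class. The transfer back to $\burn_G$ is not levelwise but global: $\chi$ is a levelwise integral extension, so the going-up theorem for Tambara functors \cite[Cor.~5.10]{CMQSV:24} makes $\chi^*\colon\Spec(\burnghost)\to\Spec(\burn_G)$ surjective. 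Your outline contains the right ingredients (Dress, norms, induction, the ghost map) but tries to run the argument inside $A(K)$ where the levelwise picture is too coarse; the ghost is not merely a bookkeeping device here but the place where the classification becomes tractable.
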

The ideals $\fr{p}_{H,p}$ are as defined in \cite[Definition 3.1]{calle/ginnett:2019}, recalled in \Cref{def:the_ideals}.
We give examples of the containment structure for nonabelian groups in \Cref{section:examples}.

\begin{remark}
    In \cite[Section 4.2]{nakaoka:2012}, Nakaoka also defined an analogue of the Zariski topology on the spectrum of prime ideals in a Tambara functor, giving it the structure of a topological space. The Burnside Tambara functor $\burn_G$ is levelwise Noetherian and thus Noetherian as a Tambara functor, so by \cite[Thm.\ A]{CMQSV:24}, the topology on $\Spec(\burn_G)$ is determined by the containments listed above.
\end{remark}

Aside from our intrinsic interest in algebraic geometry over Tambara functors, our computations are motivated by a (still conjectural) equivariant analogue of tensor-triangular geometry and its applications in equivariant stable homotopy theory. Classically, the spectrum of thick tensor ideals in the equivariant stable homotopy category is closely related to chromatic phenomena. In \cite{balmer/sanders:2017}, Balmer and Sanders compute the set of thick tensor ideals in the equivariant stable homotopy category for all finite groups by analyzing the map
\[
\operatorname{Spc}(\operatorname{SH}(G)^c) \longrightarrow \operatorname{Spec}(A(G))
\]
from the Balmer spectrum of (the subcategory of compact objects in) the $G$-equivariant stable homotopy category to the Zariski spectrum of the Burnside ring of $G$. The topology on this set is determined for $G=C_p$ in \emph{loc. cit.}, for arbitrary finite abelian groups in \cite{6AP}, and for extra-special $2$-groups in \cite{KL24}. Our computations serve as the target of a conjectured comparison map from an equivariant refinement of the Balmer spectrum of a $G$-tensor-triangulated category to the Nakaoka spectrum of the endomorphisms of the unit; in this case, a conjectured map
\[
\underline{\operatorname{Spc}}(\underline{\operatorname{SH}}(G)^c) \longrightarrow \Spec(\uA_G),
\]
where the underlines indicate the presence of genuine equivariant multiplicative structure.

\subsection{Proof Outline}

Our proof of \Cref{main theorem} employs ideas from \cite{CalleGinnett:23} and \cite{CMQSV:24}. In \cite{CalleGinnett:23}, the first author and Ginnett define prime Tambara ideals $\mathfrak{p}_{H,p}$ of $\uA_G$ for each subgroup $H \leq G$ and $p$ prime or zero (\Cref{def:the_ideals}). Our goal is to show that the ideals $\mathfrak{p}_{H, p}$ are all of the prime ideals in $\uA_G$ over any finite group, and to describe all containments between them.

In \cite{CalleGinnett:23}, the first author and Ginnett use the \textit{ghost} of the Burnside Tambara functor (\Cref{def:ghost}) to understand the poset structure of the Nakaoka spectrum of $\uA_G$. 
The ghost, denoted $\ghost(\uA_G)$, is built out of rings classically known as the tables of marks for subgroups of $G$ and receives a \textit{ghost map} from the Burnside Tambara functor. This map provides ``ghost coordinates'' for working with elements in $\uA_G$.

\begin{remark}
    \label{remark:ghost vs ghost}
    When $G = C_p$, the ghost of the Burnside Tambara functor defined here is isomorphic to the ghost construction of \cite{CMQSV:24} applied to the Burnside Tambara functor $\uA_G$. This motivates our choice of notation. However, the ghost construction of \textit{loc. cit.} is only defined for $G = C_p$; in this paper, we study arbitrary finite $G$, but only consider the ghost of the Burnside Tambara functor rather than a general ghost construction.
\end{remark}

There is an issue with the argument in \cite{CalleGinnett:23} that the ghost of the Burnside Tambara functor is a Tambara functor (see \Cref{remark:cg-issue}), but in \Cref{ghost is TF} and \Cref{prop:ghost is map}, we resolve this issue, confirming that the ghost of the Burnside Tambara functor is indeed a Tambara functor, and moreover, that the ghost map
\[
	\ghostmap: \uA_G \to \ghost(\uA_G)
\]
is a map of Tambara functors. To the best of the authors' knowledge, the table of marks has not appeared as a Tambara functor elsewhere in the literature. Thus, our theorems should be of independent interest.

The ghost map map is a levelwise integral extension, so the induced map
\[
	\ghostmap^*: \Spec(\ghost(\uA_G)) \to \Spec(\uA_G)
\]
is surjective by \cite[Cor. 5.10]{CMQSV:24}. Our problem then reduces to computing $\Spec(\ghost(\uA_G))$ and computing preimages of its contents under $\chi$.

To that end, we produce a family of prime ideals $\mathcal{P}_{H,p} \subset \ghost(\uA_G)$, indexed by pairs $(H,p)$ where $H$ is a subgroup of $G$ and $p$ is a prime number or $0$ (\Cref{thm:prime iff subgp fam}). We then show that these are all of the prime ideals of $\ghost(\uA_G)$ (\Cref{prime over p} and \Cref{prime over zero}). By construction, $\ghostmap^*(\mathcal{P}_{H,p}) = \mathfrak{p}_{H,p}$, where $\mathfrak{p}_{H,p} \subset \uA_G$ is the prime ideal defined in \cite{CalleGinnett:23}. This establishes that $\Spec(\burn_G) = \{\fr{p}_{H,p}\mid H\leq G, p \text{ prime or } 0\}$ as a set, and we subsequently check that the primes $\mathfrak{p}_{H,p}$ satisfy the expected containments (\Cref{thm:containment all}), again using the ghost map.

\subsection{Notation}
Throughout the paper we use the following notation:

\begin{itemize}
	\item $H \leq G$ means $H$ is a subgroup of $G$ and $H < G$ is proper subgroup;
	\item $H\preccurlyeq_G K$ means $H$ is subconjugate to $K$ in $G$ and $H \prec_G K$ means $H$ is properly subconjugate to $K$;
	\item $N \trianglelefteq G$ is a normal subgroup and $N \lhd G$ is a proper normal subgroup;
	\item we write ${}^g\!H := gHg^{-1}$ and $H^g := g^{-1}Hg$;    
	\item for subgroups $H,K \leq G$, we write $H \conjugate{G} K$ if $H$ and $K$ are conjugate subgroups in $G$.
\end{itemize}

\subsection{Acknowledgements}

We would like to thank Tim Dokchitser, who maintains GroupNames (\url{https://people.maths.bris.ac.uk/~matyd/GroupNames/}), a database of finite groups which was incredibly valuable for parts of this work.  We would also like to thank Noah Wisdom for pointing out an omission in the statement of \cref{thm:containment all} in an earlier version of this article. The authors would also like to thank the Isaac Newton Institute for Mathematical Sciences, Cambridge, for support and hospitality during the programme ``Equivariant homotopy theory in context" where work on this paper was undertaken. Finally, the authors would like to thank the anonymous referee for valuable comments and suggestions. 

This work was supported by EPSRC grant no EP/Z000580/1.
MC was partially supported by NSF grant DGE-1845298. DC was partially supported by NSF grant DMS-2135960. DM was partially supported by NSF grant DMS-2135884. JDQ and BS were partially supported by NSF grant DMS-2414922 and JDQ was additionally supported by NSF grant DMS-2441241. DVN was partially supported by NSF grant DMS-2052923.

\section{Background}

\subsection{The Burnside Ring and its Zariski Spectrum}
We recall the work of A.~Dress \cite{Dress1969,dress:1971} on the spectrum of the Burnside ring on a finite group.

\begin{definition} \label{defn:burnside ring}
    The \textit{Burnside ring} of a finite group $G$ is the Grothendieck group completion of the semi-ring of finite $G$-sets, denoted $A(G)$. That is, $A(G)$ is the ring of formal differences of isomorphism classes of finite $G$-sets, with addition given by disjoint union and multiplication given by Cartesian product.
\end{definition}

Let $H$ be a subgroup of $G$, and let $X$ be a finite $G$-set. In \cite{Dress1969}, Dress shows that the
\textit{mark of $H$ on $X$}
\begin{equation}\label{eqn:mark}
    \begin{aligned}
        \varphi_G^H\colon A(G) & \rightarrow \ZZ   \\
        X                      & \mapsto \abs{X^H}
    \end{aligned}
\end{equation}
is a ring homomorphism for all $H\leq G$. Here $X^H$ denotes the $H$-fixed points of $X$.  

Results of Burnside imply that the map $\varphi_G\colon A(G) \rightarrow \prod_{H\leq G} \ZZ$ given by \begin{equation}\label{eqn:phiG}
    \varphi_G = \prod_{H\leq G} \varphi_G^H
\end{equation} 
is an injective ring homomorphism \cite[Lemma 1]{Dress1969}.

\begin{definition}\label{def:phi}
    Let $p \in \ZZ$ be a prime or zero. Let $\varphi_{G, p}^H\colon A(G) \rightarrow \ZZ/p\ZZ$ denote the composite of $\varphi_G^H$ and the usual quotient map $\ZZ \rightarrow \ZZ/p\ZZ$.
\end{definition}

By analyzing the induced map
\[
	\varphi_G^*: \Spec\bigg(\prod_{H \leq G} \mathbb{Z}\bigg) \to \Spec(A(G)),
\]
Dress proved the following.

\begin{theorem}[{\cite[\S 5]{dress:1971}}] \label{thm:spec burn ring}
    The Zariski spectrum of the Burnside ring is $$\Spec(A(G)) = \big\{\!\ker\varphi_{G, p}^H\mid H\leq G, ~p \in \ZZ \text{ a prime or zero}\big\}.$$
\end{theorem}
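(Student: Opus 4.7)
The plan is to deduce \cref{thm:spec burn ring} from the injectivity of $\varphi_G$ together with standard commutative algebra. First I would observe that $\varphi_G^H = \varphi_G^{{}^g\!H}$ for every $g \in G$, since conjugation by $g$ gives a bijection $X^H \to X^{{}^g\!H}$ on fixed points of any $G$-set $X$. Consequently, $\varphi_G$ factors through the subring $R \cong \prod_{[H]} \ZZ$ of $\prod_{H \leq G} \ZZ$ consisting of tuples whose entries agree on conjugate subgroups, where $[H]$ ranges over conjugacy classes of subgroups of $G$. Both $A(G)$ and $R$ are free $\ZZ$-modules of rank equal to the number of conjugacy classes of subgroups of $G$, so the injection $A(G) \hookrightarrow R$ has finite cokernel; hence $R$ is a finitely generated $A(G)$-module and $A(G) \hookrightarrow R$ is an integral extension.

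By lying-over, the induced map $\varphi_G^\ast : \Spec(R) \to \Spec(A(G))$ is surjective. Since $R$ is a finite product of PIDs, its spectrum is $\bigsqcup_{[H]} \Spec(\ZZ)$, whose primes are the ideals $\mathfrak{q}_{H,p}$ consisting of tuples whose $H$-th coordinate lies in $(p) \subseteq \ZZ$, for $H \leq G$ (up to conjugacy) and $p$ a prime or zero. Unwinding the definitions, the pullback $\varphi_G^{-1}(\mathfrak{q}_{H,p})$ equals $\ker \varphi_{G,p}^H$, so by surjectivity of $\varphi_G^\ast$ every prime of $A(G)$ has the claimed form; conversely each $\ker \varphi_{G,p}^H$ is prime as the preimage of a prime ideal under a ring map.

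The main technical obstacle is showing the cokernel of $A(G) \hookrightarrow R$ is finite, i.e.\ that $\varphi_G \otimes \QQ$ is an isomorphism. This is classical and can be established by ordering a system of representatives $K_1, \ldots, K_n$ of the conjugacy classes of subgroups so that $|K_i| \leq |K_j|$ whenever $i \leq j$, and observing that the matrix $\bigl[|(G/K_j)^{K_i}|\bigr]_{i,j}$ is upper-triangular with nonzero diagonal entries $|N_G(K_i)/K_i|$ (since $(G/K_j)^{K_i}$ is nonempty only when $K_i$ is subconjugate to $K_j$). Once integrality is in hand, the remainder of the argument is formal.
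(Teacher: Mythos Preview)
Your argument is correct. Note, however, that the paper does not supply its own proof of this theorem: it is quoted from Dress \cite[\S 5]{dress:1971}, with only the one-sentence hint ``By analyzing the induced map $\varphi_G^*: \Spec\big(\prod_{H \leq G} \mathbb{Z}\big) \to \Spec(A(G))$, Dress proved the following.'' Your write-up is precisely a fleshing-out of that hint, and it matches the classical proof: factor $\varphi_G$ through the ghost ring $\tilde A(G)$ (what you call $R$), use the upper-triangularity of the table of marks to see that $\varphi_G$ has finite cokernel, conclude integrality, and apply lying-over to pull back the obvious primes of the product ring. The paper later invokes exactly these ingredients (the ghost ring in \eqref{eq:table of marks}, finiteness of the cokernel via \cite[Proposition 1.2.3]{tomDieck}, and going-up for integral extensions of Tambara functors) in its own Tambara-functor analogue, so your reconstruction is entirely in line with both Dress's original and the paper's methodology.
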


Dress also proves some containment results which we now recall, following \cite[Theorem 3.6]{balmer/sanders:2017}. We first need a definition. 

\begin{definition}\label{defn:OpH}
    For any subgroup $H \leq G$ and $p$ prime, the \textit{$p$-residual subgroup} of $H$ in $G$ is the unique normal $p$-perfect subgroup $O^p(H)$ such that the quotient $H/O^p(H)$ is a $p$-group. That is,
    \[
        O^p(H) = \bigcap I,
    \]
    where the intersection is over all $I\trianglelefteq H$ such that $|H:I| = p^n$ for some $n\in\NN$.
\end{definition}

\begin{remark}\label{rmk:OpH from sylows}
    One characterization of $O^p(H)$ is as the subgroup generated by all the Sylow $q$-subgroups of $H$ for $q\neq p$. Note that this implies that if $H\preccurlyeq_G K$ then $O^p(H)\preccurlyeq_G O^p(K)$.
\end{remark}
\begin{example}\label{example: Op for p-groups}
    If $G$ is a $p$-group then $O^p(G)=e$ and $O^q(G) =G$ for all $q\neq p$.  
\end{example}
\begin{example}\label{example: Oq for for not q groups}
    If $p$ is any prime which does not divide $|G|$ then $O^p(G)=G$.
\end{example}
\begin{example}
    If $G$ is a non-abelian simple group then $O^p(G)=G$ for all $p$.
\end{example}
\begin{example}\label{example: O of Dpn}
    Let $G=D_{p^n}$ be the dihedral group with $2p^n$ elements for some prime $p>2$ and $n\geq 1$. The only normal subgroups of $D_{p^n}$ are $C_{p^k}$ for $1\leq k\leq n$.  Thus the only non-trivial quotient of $D_{p^n}$ with order a prime power is $C_2 = D_{p^n}/C_{p^n}$.  Thus $O^2(D_{p^n}) = C_{p^n}$ and $O^q(D_{p^n})=D_{p^n}$ for all $q>2$. 
\end{example}
\begin{example}\label{example: O of A4}
    Let $G=A_4$ be the alternating group with $12$ elements.  The only non-trivial normal subgroup of $A_4$ is the Klein $4$-group $K_4$.  Since $A_4/K_4\cong C_3$ we have $O^3(A_4) = K_4$ while $O^q(A_4) = A_4$ for $q\neq 3$. 
\end{example}

Our interest in $O^p(H)$ stems from the following theorem of Dress. Recall that we write $H \sim_G K$ to mean that $H$ and $K$ are conjugate subgroups of $G$.

\begin{theorem}[{\cite{dress:1971}}]\label{thm:dress containment}
    Let $H,K\leq G$ and $p,q$ be primes. Then
    \begin{enumerate}
        \item[(i)] $\ker(\varphi_{G,0}^H)\subseteq \ker(\varphi_{G,0}^{K})$ if and only if $\ker(\varphi_{G,0}^H)= \ker(\varphi_{G,0}^{K})$ if and only if $H\sim_GK$;
        \item[(ii)] $\ker(\varphi_{G,p}^H)\subseteq \ker(\varphi_{G,q}^{K})$ implies $p=q$ and $\ker(\varphi_{G,p}^H)= \ker(\varphi_{G,p}^{K})$;
        \item[(iii)] $\ker(\varphi_{G,p}^H)\subseteq \ker(\varphi_{G,p}^{K})$ if and only if $\ker(\varphi_{G,p}^H)= \ker(\varphi_{G,p}^{K})$ if and only if $O^p(H) \sim_G (O^p(K))$;
        \item[(iv)] $\ker(\varphi_{G,0}^H)\subset \ker(\varphi_{G,p}^{H})$ and $\ker(\varphi_{G,p}^H)\not\subseteq \ker(\varphi_{G,0}^{K})$.
    \end{enumerate}
\end{theorem}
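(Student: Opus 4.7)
\emph{Proof proposal.} The plan is to combine algebraic generalities about prime ideals with two group-theoretic inputs: the classical fact that the total mark map $\varphi_G = \prod_H \varphi_G^H$ is injective (and, after tensoring with $\mathbb{Q}$, an isomorphism), and the Burnside-type congruence
\[
|X^H| \equiv |X^{O^p(H)}| \pmod p \qquad \text{for every finite $G$-set } X,
\]
which holds because $H/O^p(H)$ is a $p$-group acting on $X^{O^p(H)}$, and fixed-point counts of $p$-group actions are congruent modulo $p$ to the total size.

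First I would dispose of (ii) and (iv) by formal considerations. The image of $\varphi_{G,p}^H$ is $\mathbb{F}_p$, so each $\ker\varphi_{G,p}^H$ is maximal; the image of $\varphi_{G,0}^H$ is $\mathbb{Z}$, so each $\ker\varphi_{G,0}^H$ is a minimal prime. For (ii), a containment $\ker\varphi_{G,p}^H \subseteq \ker\varphi_{G,q}^K$ puts the integer $p$ into $\ker\varphi_{G,q}^K$, forcing $p = q$; maximality on both sides then gives equality. For (iv), the integer $p$ lies in $\ker\varphi_{G,p}^H$ but not in any $\ker\varphi_{G,0}^K$, which yields both the strict containment and the non-containment.

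For (i), the ``if'' direction is immediate from $|X^H| = |X^{{}^g\!H}|$. Since the $\ker\varphi_{G,0}^H$ are minimal primes, any containment among them is equality, so it suffices to show $\ker\varphi_{G,0}^H = \ker\varphi_{G,0}^K$ forces $H \conjugate{G} K$. Both $\varphi_{G,0}^H$ and $\varphi_{G,0}^K$ are surjections onto $\mathbb{Z}$ sending $1$ to $1$, so equality of kernels gives $\varphi_G^H = \varphi_G^K$ as ring maps. To extract conjugacy I would evaluate these two maps on the generators $G/L$ as $L$ ranges over conjugacy class representatives and invoke the classical fact that the table-of-marks matrix $\bigl(|(G/L)^K|\bigr)_{L,K}$ is triangular with nonzero diagonal $|N_G(L):L|$ under any linear extension of the subconjugacy order, and hence invertible over $\mathbb{Q}$; this separates distinct conjugacy classes.

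Part (iii) carries the main technical weight. The containment $\ker\varphi_{G,0}^H \subseteq \ker\varphi_{G,p}^K$ is equivalent to asserting that $\varphi_{G,p}^K$ factors through the quotient $A(G)/\ker\varphi_{G,0}^H \cong \mathbb{Z}$; the unique unital map $\mathbb{Z} \to \mathbb{F}_p$ is reduction mod $p$, so this containment is equivalent to $\varphi_G^K(x) \equiv \varphi_G^H(x) \pmod p$ for all $x$, i.e., $\ker\varphi_{G,p}^H = \ker\varphi_{G,p}^K$ (this yields the middle ``iff''). The Burnside congruence displayed above then gives $\ker\varphi_{G,p}^H = \ker\varphi_{G,p}^{O^p(H)}$, reducing the problem to a mod-$p$ analogue of (i): for two $p$-perfect subgroups $L, L'$ of $G$, equality $\ker\varphi_{G,p}^L = \ker\varphi_{G,p}^{L'}$ forces $L \conjugate{G} L'$. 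This last reduction is the principal obstacle; I would attack it by localizing at $p$ and performing M\"obius inversion on the $p$-local table of marks to produce, for each $G$-conjugacy class of $p$-perfect subgroups, an element of $A(G) \otimes \mathbb{Z}_{(p)}$ whose marks modulo $p$ distinguish that class from all others.
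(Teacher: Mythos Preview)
The paper does not prove this theorem: it is stated in the background section as a classical result of Dress, with a pointer to \cite[Theorem 3.6]{balmer/sanders:2017} for the formulation. There is therefore no in-paper proof to compare your proposal against.

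That said, your outline is the standard route and is essentially correct. A couple of small comments. In (i), the sentence ``the image of $\varphi_{G,0}^H$ is $\mathbb{Z}$, so each $\ker\varphi_{G,0}^H$ is a minimal prime'' is not a valid implication on its own; but you do not actually need minimality, since the very next step (a containment of kernels of surjections onto $\mathbb{Z}$ induces a unital ring endomorphism of $\mathbb{Z}$, hence the identity) already forces equality of kernels directly from containment. In (iii), your reduction to $p$-perfect subgroups via the congruence $|X^H|\equiv |X^{O^p(H)}|\pmod p$ is exactly right, and the remaining obstacle you flag --- separating conjugacy classes of $p$-perfect subgroups modulo $p$ --- is genuinely the heart of Dress's argument. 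Your proposed attack via M\"obius inversion on the $p$-local table of marks is the classical one: it amounts to constructing the primitive idempotents of $A(G)\otimes\mathbb{Z}_{(p)}$, which are indexed precisely by conjugacy classes of $p$-perfect subgroups, and whose marks at $L$ are units in $\mathbb{Z}_{(p)}$ exactly on the class of $L$. You would need to actually carry out that inversion (or cite Dress/Gluck for the idempotent formula) to complete the proof, but the plan is sound.
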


\subsection{Tambara Functors}
\label{SubSec:TambaraFunctors}

In this subsection we give a brief introduction to Tambara functors.  We assume the reader is familiar with the definition of a Mackey functor.  A thorough treatment of both Mackey and Tambara functors can be found in \cite{strickland:2012} or \cite{mazur:2019}. Let $\Sets$ be the category of sets and functions and let $\Fin^{G}$ be the category of finite $G$-sets and $G$-equivariant functions. 

\begin{definition}[{\cite{tambara:1993}}]
\label{def:tf}
A \emph{$G$-Tambara functor} $T$  is a triple $(T^*,T_+,T_\times)$ of functors from $\Fin^G$ to $\Sets$ such that $T^*$ is contravariant and $T_+, T_\times$ are covariant, and all three functors agree on objects: $T^*(U) = T_+(U) = T_\times(U)$ for any finite $G$-set $U$. We write $T(U)$ for the common value of these functors, and given a morphism $f \colon U \to V$ of finite $G$-sets, we write 
\[
	f^* := T^*(f), \quad f_\times := T_\times(f), \quad \text{ and } \quad f_+ := T_+(f),
\]
and call these maps \emph{restriction}, \emph{norm}, and \emph{transfer} along $f$, respectively.
We also require that: 
\begin{enumerate}[(a)]
	\item $(T^*,T_+)$ is a $G$-Mackey functor;
	\item $(T^*,T_\times)$ is a $G$-semi-Mackey functor; 
	\item \label{distributive law tambara functor} given any diagram in $\Fin^G$ isomorphic to one of the form (an \emph{exponential diagram}),
	\[
		\begin{tikzcd}
			Y \ar[d, "h"'] & \ar[l, "g"'] X & Y \times_Z \prod_h X \ar[d, "f"] \ar[l, "e"'] \\
			Z & & \prod_h X, \ar[ll, "k"] 
		\end{tikzcd}
	\]
	we have 
	\[
		k_+ \circ f_\times \circ e^* = h_\times \circ g_+,
	\]
	where $\prod_h$ is the dependent product, right adjoint to the pullback functor along $h$ from the slice category $\Fin^G_{/Z}$ to the slice category $\Fin^G_{/Y}$. See \cite[Section 1]{tambara:1993} for details on the dependent product construction. 
	
\end{enumerate}
\end{definition}

\noindent We expand on this definition below. 

\medskip

The Mackey functor structure on $(T^*, T_+)$ and the semi-Mackey functor structure on $(T^*,T_\times)$ combine to give the structure of a commutative ring on each set $T(U)$, where the distributivity axiom is a consequence of \ref{distributive law tambara functor}. Moreover, the restrictions are ring homomorphisms, norms are multiplicative homomorphisms, and transfers are additive homomorphisms.

Moreover, since $(T^*, T_+)$ is a Mackey functor, the assignment $U \mapsto T(U)$ must send disjoint unions of $G$-sets to products. Since any finite $G$-set $U$ is a disjoint union of its orbits $\big(U \cong \coprod_i G/H_i\big),$ we only need to know the value of $T(G/H)$ for all $H \leq G$ in order to know $T(U)$ for any $U$. 

When $f \colon G/H \to G/K$ is the canonical surjection defined by $f(gH) = gK$, we write 
\[
	\res^K_H := f^*, \quad \nm^K_H := f_\times, \quad \text{ and } \quad \tr^K_H := f_+
\]
for restriction, norm, and transfer along $f$. 
For the conjugation isomorphism $G/H \to G/{}^g\!H$ defined by $aH \mapsto agHg^{-1}$, we write 
\[
	\conj_{g,H} \colon T(G/H) \to T(G/{}^g\!H)
\]
for the restriction along the inverse of this conjugation isomorphism. This is a ring isomorphism for any $g$ and $H$. 

%

It is possible to define a Tambara functor entirely in terms of the structure maps $\res^K_H$, $\tr^K_H$, $\nm^K_H$, and $\conj_{g,H}$. We will need this in \Cref{SS:burnghost}.

\begin{proposition}
\label{Tambara axioms}
Given a collection of commutative rings $\{T(G/H) \mid H \leq G\}$ together with morphisms $\res^K_H$, $\nm^K_H$, $\tr^K_H$, and $\conj_{g,H}$ as above, the following axioms suffice to check that these data define a Tambara functor. 
\begin{enumerate}[(a)]
    \item (Functoriality) 
    	\begin{align*}
			\res^H_L \circ \res^K_H &= \res^K_L	\\
			\nm^K_H\circ \nm^H_L &= \nm^K_L\\
			\tr^K_H\circ \tr^H_L &= \tr^K_L\\
			c_{g,{}^h\!H}\circ c_{g,H} &= c_{gh,H}
		\end{align*} 
    \item\label{conjugacy} (Conjugacy) 
    	\begin{align*}
			c_{g,H}\circ \res^K_H &= \nm^{{}^g\!K}_{{}^g\!H}\circ c_{g,K}\\
			c_{g,K}\circ \nm^K_H &= \nm^{{}^g\!K}_{{}^g\!H}\circ c_{g,H}\\
			c_{g,K}\circ \tr^K_H &= \tr^{{}^g\!K}_{{}^g\!H}\circ c_{g,H}
		\end{align*}
    \item (Additive Double Coset Formula) For any $H,L\leq K$ we have
    \[
        \res^K_L\circ \tr^K_H = \sum\limits_{[\gamma]\in L\backslash K/H} \tr^L_{L\cap {}^\gamma\!H} \circ \res^{{}^\gamma\!H}_{L\cap {}^\gamma\!H}\circ  c_{\gamma}.
    \]
    \item (Multiplicative Double Coset Formula)  For any $H,L\leq K$ we have
    \[
        \res^K_L\circ \nm^K_H({a}) = \prod\limits_{[\gamma]\in L\backslash K/H} \nm^L_{L\cap {}^\gamma\!H} \circ \res^{{}^\gamma\!H}_{L\cap {}^\gamma\!H}\circ  c_{\gamma}({a}).
    \]
    \item (Frobenius Reciprocity) 
    	\[
			\tr_H^K(\res_H^K(x)\cdot y) = x \cdot \tr_H^K(y)
		\]
    \item\label{tambara reciprocity}%
    (Tambara Reciprocity) %
    For any morphism $f : X \to G/H$ of finite $G$-sets, form the exponential diagram
    \[
    	\begin{tikzcd}
			G/H \ar[d, "\phi"'] 
				& 
			X \ar[l, "f"'] 
				& 
			G/H \times_{G/K} \prod_g X 
				\ar[d, "\phi'"] 
				\ar[l, "e"'] 
				\\
			G/K
				& 
				&
			\prod_g X 
				\ar[ll, "p"] 
		\end{tikzcd}
    \]
    in the category $\Fin^G$. Then 
    \[
    	\phi_\times \circ f_+ = p_+ \circ \phi'_\times \circ e^*.
    \]
    To interpret the above equation terms of the data of the maps $\res$, $\nm$, $\tr$, and $\conj$, one chooses for any morphism $h \colon A \to B$ of finite $G$-sets an orbit decomposition of $B$, $B \cong \coprod_i G/L_i$, and pulls this back to an orbit decomposition of $A$ by choosing for each $i$ an orbit decomposition of the preimage of $G/L_i$ under $h$, $h^{-1}(G/L_i) \cong \coprod_j G/J_{ij}$. Finally, 
    \begin{equation}
    \label{generalResNmTr}
    	h^* = \left( \res_{J_{ij}}^{L_i} \right)_{i,j},
		\quad
		h_\times = \bigg(\prod_j \nm_{J_{ij}}^{L_i}\bigg)_i,
		\quad 
		\text{ and } 
		\quad
    	h_+ = \bigg(\sum_j \tr_{J_{ij}}^{L_i} \bigg)_i.
    \end{equation}
    Composing two such morphisms requires one to reconcile two potentially different orbit decompositions using the conjugation isomorphisms. 

\end{enumerate}
\end{proposition}

\subsection{The Burnside Tambara Functor}
\label{SubSec: The Burnside Tambara Functor} 

In this work, we focus on the following Tambara functor:

\begin{definition}\label{defn:burnside tamb functor}
The \emph{Burnside $G$-Tambara functor}, denoted $\burn_G$, is defined by $\burn_G(G/H)=A(H)$ for each $H \leq G$. For subgroups $H\leq K\leq G$, an element $g\in G$, a $K$-set $Y\in \burn_G(G/K)$, and an $H$-set $X\in\burn_G(G/H)$, we define the Tambara structure maps by \begin{align*}
    \res_{H}^{K}\colon \burn_G(G/K)&\longrightarrow \burn_G(G/H)\\
    Y &\longmapsto Y\text{ with restricted $K$-action,}\\
    \tr_{H}^{K}\colon \burn_G(G/H)&\longrightarrow \burn_G(G/K)\\
    X &\longmapsto K\times_H X,\\
    \nm_{H}^{K}\colon \burn_G(G/H)&\longrightarrow \burn_G(G/K)\\
    X &\longmapsto {\rm Map}_H(K,X),\\
    \conj_{g,H}\colon \burn_G(G/H)&\longrightarrow \burn_G(G/\,{}^g\!H)\\
    X &\longmapsto {}^g\!X,
\end{align*}
where ${}^g\!X$ has underlying set $X$ and ${^{g}\!H}$-action $(ghg^{-1})\cdot x = hx$. The restriction, transfers, and conjugation maps may be extended to all finite $G$-sets by linearity. The norm map along any map of finite $G$-sets can be determined using \eqref{generalResNmTr}. 
\end{definition}

\subsection{Tambara ideals, Spectra, and Going Up}
The theory of Tambara ideals mirrors the theory of ideals of commutative rings. An ideal $\I$ of a $G$-Tambara functor $T$ is a levelwise collection of ideals $\I(G/H) \subseteq T(G/H)$ (one for each subgroup of $G$) which is closed under all of the Tambara structure maps; see \cite[Def. 2.1]{nakaoka:2012} for a precise definition. 

The definition below is used to make sense of prime ideals in Tambara functors. 

\begin{definition}
    \label{def:Q}
    Let $T$ be a $G$-Tambara functor, $\I$ an ideal of $T$, and $K_1, K_2\leq G$. Let $a \in T(G/K_1)$ and $b \in T(G/K_2)$. Define the proposition $Q(\I, a, b)$ by
    \begin{enumerate}
        \item[(*)] The relation
              \[
              	\Big(\nm_{{}^{g_{\scalebox{0.6}{\tiny 1}}}\!H_1}^{L}\circ \conj_{g_1, H_1}\circ \res_{H_1}^{K_1}(a)\Big)
                  \cdot\left(\nm_{{}^{g_{\scalebox{0.6}{\tiny 2}}}\!H_2}^{L} \circ \conj_{g_2, H_2} \circ \res_{H_2}^{K_2}(b)\right) \in \I(G/L)\]
              holds for all $L, H_1, H_2 \leq G$ and $g_1, g_2 \in G$ such that $L \geq {}^{g_1}\!H_1, ~L \geq {}^{g_2}\!H_2, ~H_1\leq K_1,$ and $H_2\leq K_2$.
    \end{enumerate}
\end{definition}

Nakaoka proves that the following definition of prime ideals coincides with other possible definitions \cite[Proposition 4.2]{nakaoka:2014}. We use this definition because it is more practical to check.

\begin{definition}
    An ideal $\mathfrak{p}$ of a Tambara functor $T$ is \emph{prime} if
    \begin{enumerate}[(a)]
        \item $\mathfrak{p} \neq T$ (equivalently, for all $H \leq G$, $1 \notin \mathfrak{p}(G/H)$);
        \item Fix any two subgroups $H, H' \leq G$ and elements $a \in T(G/H)$, and $b \in T(G/H')$. If $Q(\mathfrak{p},a,b)$ holds then $a \in \mathfrak{p}(G/H)$ or $b \in \mathfrak{p}(G/H')$.
    \end{enumerate}
\end{definition}

The following proposition will be used to classify prime ideals in the sequel. It implies, in particular, that in a Tambara functor $T$ that has $T(G/e) = \mathbb{Z}$ with trivial $G$-action (the only Tambara functors we will consider below), each prime Tambara ideal $\mf{p}$ has $\mathfrak{p}(G/e) = (p)$ with $p$ a prime integer or zero. 

\begin{proposition}\label{prop:prime at G/e}
    Suppose that $\mathfrak{p}\subset T$ is a prime Tambara ideal such that the Weyl group action of $G$ on $T(G/e)$ is trivial. Then $\mathfrak{p}(G/e)$ is a prime ideal of the commutative ring $T(G/e)$.
\end{proposition}\begin{proof}
The claim follows immediately from \cite[Lemma 3.7]{CMQSV:24}. 
\end{proof}


The \emph{spectrum} of a Tambara functor $T$, denoted $\Spec(T)$, is the collection of all prime ideals, equipped with an analogue of the Zariski topology; see \cite[Section 4.2]{nakaoka:2012}. If $T$ is a Noetherian Tambara functor (e.g., if $T$ is levelwise Noetherian), then the topology on $\Spec(T)$ is completely determined by the poset structure on the set of prime ideals by \cite[Theorem A]{CMQSV:24}. As the Burnside Tambara functor is levelwise finitely generated, and therefore Noetherian, the topology on $\Spec(\burn_G)$ is determined by the poset structure on the collection of prime ideals.

\subsection{The Spectrum over Cyclic Groups}

Nakaoka calculates $\Spec(\burn_G)$ for $G$ a cyclic $p$-group in \cite{nakaoka:2012b, nakaoka:2014}. In \cite{CalleGinnett:23}, the first author and Ginnett extend Nakaoka's computation to all cyclic groups using Dress's computations of the spectrum of the Burnside ring in \cite{dress:1971}.

\begin{definition}\label{def:the_ideals}
    Let $G$ be a finite group and $\burn_G$ the Burnside $G$-Tambara functor. Let $K$ be a subgroup of $G$ and $p \in \ZZ$ a prime or zero. For each $H\leq G$ define
    $$\fr{p}_{K, p}(G/H) = \bigcap_{\substack{I \leq H \\ I \preccurlyeq_G K}} \ker(\varphi_{H, p}^I)$$
    where $\varphi^I_{H,p}$ is as in \Cref{def:phi} and $I \preccurlyeq_G K$ means $I$ is subconjugate to $K$ in $G$.
\end{definition}

For any finite group $G$, the first author and Ginnett show that $\mathfrak{p}_{K,p}$ is a prime ideal of $\burn_G$ for all $K \leq G$ and $p \in \ZZ$ a prime or zero \cite[Theorem 3.8]{CalleGinnett:23}. When $G$ is abelian, they classify pairwise containments between these ideals \cite[Theorem 1.1]{CalleGinnett:23}. Moreover, when $G$ is cyclic, they show in \cite{CalleGinnett:23} that these are \emph{all} of the prime ideals (\emph{op. cit.}).
We show that the primes $\mathfrak{p}_{K,p}$ are \textit{all} of the prime ideals of $\burn_G$ for any finite $G$, and determine all pairwise containments between them in \Cref{thm:containment all}.

\subsection{The Ghost of the Burnside Tambara Functor}

The results of \cite{CalleGinnett:23} are proved using a construction called the \textit{ghost of the Burnside Tambara functor}. This construction is inspired by the \textit{ghost ring} of the Burnside ring
\begin{equation}
\label{eq:table of marks}
	\tilde A(G) := \Big( \prod_{K \leq G} \ZZ \Big)^G,
\end{equation}
the sub-ring of $\prod_{K \leq G} \ZZ$ consisting of the $G$-fixed points, where the $G$-action permutes the coordinates according to the conjugation action of $G$ on its subgroups. This is also known as the \emph{table of marks} of $G$. It is straightforward to verify that the marks homomorphism \eqref{eqn:phiG} factors through $\tilde{A}(G)$.  

\begin{remark}\label{remark: A tilde presentation}
    We could have instead taken the product over \emph{conjugacy classes} of subgroups to obtain the ring $\prod_{[K]} \mathbb{Z}$, which is isomorphic as a ring to $\tilde{A}(G)$. We use the presentation \eqref{eq:table of marks} to eliminate the need to choose representatives of conjugacy classes in many arguments.
\end{remark}

\begin{notation}\label{notation: ideals of A tilde}
    The ideals of $\widetilde{A}(G)$ are precisely the sets $\prod_{K \leq G} n_K \ZZ$ where $(n_K)_{K \leq G}$ is an indexed collection of natural numbers such that $n_{K_1} = n_{K_2}$ whenever $K_1 \conjugate{G} K_2$. We often denote such an ideal simply as the tuple of integers $(n_K)_{K \leq G}$ for brevity.
\end{notation}

Just as the Burnside rings of the subgroups of $G$ assemble into a Tambara functor $\burn_G$, the ghost rings $\widetilde{A}(H)$ also assemble into a Tambara functor. The following definition contains the values of the rings, restriction, transfer, conjugation, and norm maps of this Tambara functor.  The proof that this collection of data satisfies the Tambara axioms, hence actually is a Tambara functor, is deferred to \Cref{SS:burnghost}. 

\begin{definition}\label{cor:structure map fmlas}\label{def:ghost} 
    The \textit{ghost of the Burnside Tambara functor}, denoted $\burnghost$, is defined by $\burnghost(G/H) := \widetilde{A}(H)$. For $H \leq K \leq G$, let $a \in \burnghost(G/H)$, $b \in\burnghost(G/K)$, and $g\in G$. The restriction, transfer, norm, and conjugation maps in $\burnghost$ are given by
    \begin{align*}
        \res_{H}^{K}(b)_L & = b_L                         \\
        \tr_{H}^{K}(a)_I  & = \sum_{\substack{[k] \in K/H       \\ I^k\leq H}}a_{I^k}\\
        \nm_{H}^{K}(a)_I  & = \prod_{[g] \in I\setminus K/H}
        a_{I^g \cap H}                             \\
        \conj_{g, H}(a)_J & = a_{J^{g}}
    \end{align*}
    \noindent for $L \leq H, I \leq K,$ and $J \leq H^g$.

\end{definition}
\begin{remark}
    One can check that $\ghost(\uA_G)$ is isomorphic, as a Green functor, to the \emph{twin construction} considered by Th\'evenaz in \cite[Section 4]{Thev:88}. The Green functor $\ghost(\uA_G)$ also appears in work of Read \cite{Read} who generalizes work of Dress and Siebeneicher \cite{DS88} on the Burnside--Witt construction. While these constructions are much more general than our ghost functor, neither of them are Tambara functors. Indeed, it is unclear how to define norms on either of these constructions that turn them into Tambara functors in general. 
\end{remark}

There is also a natural comparison map from $\uA_G$ to $\burnghost$. The proof that this is a well-defined Tambara functor homomorphism is deferred to \cref{prop:ghost is map}. 

\begin{definition}\label{def:ghostmap}
The \emph{ghost map} 
$$\ghostmap: \uA_G \to \burnghost$$
is defined by setting $\ghostmap(G/H) = \varphi_H$, where $\varphi_H$ is as in \eqref{eqn:phiG}, and extending linearly.
\end{definition}

As mentioned in \Cref{remark:ghost vs ghost}, when $G = C_p$ these constructions coincide (up to isomorphism) with the object $\ghost(\uA_{C_p})$ and the morphism $\chi$ as defined in \cite{CMQSV:24}.

\begin{remark}\label{remark:cg-issue}
    The ghost of the Burnside Tambara functor defined above is the same as in \cite[Definition 3.3]{calle/ginnett:2019}; however, the argument given there that $\burnghost$ is a Tambara functor is incorrect. In particular, the asserted inclusion of $\burnghost$ into the fixed-point Tambara functor on $\prod_{K\leq G} \ZZ$ is \textit{not} a map of Tambara functors as it is not a levelwise ring map (in particular it does not preserve $1$).
    We note that this gap does not affect any of the results of \cite{calle/ginnett:2019}, as the arguments there primarily consider $\burnghost \cap \chi(\burn_G)$, which is indeed a Tambara functor (isomorphic to $\burn_G$) because $\varphi_H$ is injective for all $H\leq G$. Although the proof in \cite{calle/ginnett:2019} contains a gap, $\burnghost$ is nevertheless a Tambara functor (and $\ghostmap$ is a map of Tambara functors), as we show in \Cref{SS:burnghost}.
\end{remark}

\section{Tambara Structure of the Ghost}
\label{SS:burnghost}

In this section, we will prove that the ghost $\ghost(\uA_G)$, defined in \cref{def:ghost}, satisfies the Tambara axioms (\cref{Tambara axioms}) and is indeed a Tambara functor. We begin with four preliminary observations about finite $G$-sets which will be used to check the axioms.

\begin{lemma}[{cf.\ \cite[proof of Lemma 2.18]{CCM}}]\label{lemma: size of fixed points of orbits}
    Let $G$ be a finite group with subgroups $H\leq K\leq G$ and $J\leq G$.  There is a bijection
    \[
        (G/H)^K\cong \coprod_{[x]\in (G/K)^J} (K/H)^{J^x}.
    \]
\end{lemma}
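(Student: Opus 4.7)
The plan is to use the natural $G$-equivariant projection $\pi\colon G/H \to G/K$ induced by the inclusion $H \leq K$, and to decompose the left-hand side into fibers of the restriction of $\pi$ to $J$-fixed points. (I read the superscript ``$K$'' on the left-hand side as the intended $J$, since the right-hand side becomes trivially incompatible with $(G/H)^K$ already in the case $J = e$.)

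For any $J$-equivariant map $f\colon X \to Y$ of $J$-sets, the $J$-fixed points decompose along fibers: $X^J = \coprod_{y \in Y^J} (f^{-1}(y))^J$. Applying this to $\pi$, regarded as $J$-equivariant by restricting the $G$-action, yields
\[
    (G/H)^J = \coprod_{[x] \in (G/K)^J} (\pi^{-1}([x]))^J.
\]

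The remaining step is to identify each fiber as a $J$-set with a familiar object. After choosing a representative $x \in G$ for $[x]$, the assignment $kH \mapsto xkH$ defines a bijection $K/H \xrightarrow{\sim} \pi^{-1}([x])$. Transporting the $J$-action along this bijection, a direct computation shows that $j \in J$ acts on $kH$ by $kH \mapsto (x^{-1}jx)kH$; that is, the action is left translation by $J^x = x^{-1}Jx$. This is a well-defined action on $K/H$ precisely because $[x] \in (G/K)^J$ forces $J^x \leq K$. Hence $(\pi^{-1}([x]))^J \cong (K/H)^{J^x}$, and combining with the fiber decomposition above yields the claimed bijection.

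I do not expect any substantive obstacle. The only bookkeeping issue is the dependence on the choice of representative $x$: replacing $x$ by $xk'$ for $k' \in K$ conjugates $J^x$ by $k'$ and modifies the identification $K/H \cong \pi^{-1}([x])$ by precomposition with left translation by $k'^{-1}$. These two modifications are compatible, so the bijection on each summand is canonically determined up to the $K$-conjugacy inherent in writing $J^x$, and the total bijection out of the coproduct is well-defined. This is a routine verification and, modulo notational care, completes the proof.
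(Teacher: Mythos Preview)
Your proof is correct, and your reading of the typo is right: the left-hand side should be $(G/H)^J$, not $(G/H)^K$ (as you observe, the case $J = e$ already rules out the latter). The paper does not supply its own proof of this lemma; it only states the result with a citation, so there is nothing to compare your argument against. Your fiber decomposition along the projection $\pi\colon G/H \to G/K$ is the standard approach and goes through without issue.
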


\begin{lemma}\label{lemma: restriction of K/H}
    For any subgroups $H,L\leq K$ there is a bijection
    \[
        \Phi\colon \coprod\limits_{[\gamma]\in L\backslash K/H} L/(L\cap {}^\gamma\!H)\to K/H
    \]
    given explicitly by sending $[\ell]\in L/(L\cap {}^\gamma\!H)$ to $[\ell\gamma]\in K/H$. This bijection is $L$-equivariant with respect to the left action of $L$ on the source and the target.
\end{lemma}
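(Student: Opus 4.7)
The plan is to recognize this as the standard orbit-stabilizer decomposition of $K/H$ viewed as an $L$-set by restriction along $L \hookrightarrow K$. The key observation driving everything is that the $L$-orbit of $\gamma H \in K/H$ has stabilizer exactly $L \cap {}^\gamma\!H$: for $\ell \in L$, we have $\ell\gamma H = \gamma H$ iff $\gamma^{-1}\ell\gamma \in H$ iff $\ell \in \gamma H \gamma^{-1} = {}^\gamma\!H$. So the orbit $L \cdot \gamma H$ is $L$-isomorphic to $L/(L \cap {}^\gamma\!H)$, and the decomposition of $K/H$ into $L$-orbits is indexed by the double coset space $L\backslash K / H$.

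First I would check that $\Phi$ is well-defined: if $\ell' = \ell m$ with $m \in L \cap {}^\gamma\!H$, then writing $m = \gamma h \gamma^{-1}$ for some $h \in H$ gives $\ell'\gamma H = \ell m \gamma H = \ell \gamma h H = \ell \gamma H$, so the output does not depend on the coset representative $\ell$. (Once and for all, one fixes a set of double coset representatives $\{\gamma\}$, so there is no ambiguity in $\gamma$.) Surjectivity is immediate from the definition of a double coset decomposition: any $kH \in K/H$ satisfies $k \in L\gamma H$ for a unique chosen representative $\gamma$, so $k = \ell \gamma h$ for some $\ell \in L$, $h \in H$, whence $kH = \ell\gamma H = \Phi([\ell])$.

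For injectivity, suppose $\Phi([\ell_1]) = \Phi([\ell_2])$ where $[\ell_i] \in L/(L \cap {}^{\gamma_i}\!H)$. Then $\ell_1 \gamma_1 H = \ell_2 \gamma_2 H$ forces $L\gamma_1 H = L\gamma_2 H$, and by our choice of representatives $\gamma_1 = \gamma_2 =: \gamma$; from there, $\ell_2^{-1}\ell_1 \gamma H = \gamma H$ gives $\ell_2^{-1}\ell_1 \in L \cap {}^\gamma\!H$ by the stabilizer computation above, so $[\ell_1] = [\ell_2]$. Finally, $L$-equivariance is formal: for $m \in L$,
\[
    \Phi(m \cdot [\ell]) = \Phi([m\ell]) = m\ell\gamma H = m \cdot \Phi([\ell]).
\]
There is no real obstacle here — the entire content is the orbit-stabilizer theorem applied double-coset by double-coset — so the main thing to be careful about is bookkeeping around the conjugation ${}^\gamma\!H = \gamma H \gamma^{-1}$ versus $H^\gamma = \gamma^{-1} H \gamma$ when verifying the stabilizer computation.
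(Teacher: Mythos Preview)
Your proof is correct and is essentially the same argument as the paper's, just unpacked: the paper observes that $K/H \cong K \times_H (H/H)$ as a $K$-set and then cites the Mackey double coset formula for the restriction to $L$, which is precisely the orbit--stabilizer decomposition you carry out by hand.
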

\begin{proof}
	As an $L$-set, $K/H$ is isomorphic to $K \times_H (H/H)$. The result then follows from the Mackey double coset formula. 
\end{proof}

If $J\leq L$ is any subgroup, the map $\Phi$ is a $J$-equivariant bijection.  Taking orbits by the left $J$-action yields the following corollary.

\begin{corollary}\label{corollary: norm double coset formula corollary}
    For any $H,L\leq K$ and $J\leq L$ the map $\Phi$ induces a bijection
    \[
        \coprod\limits_{[\gamma]\in L\backslash K/H} J\backslash L/(L\cap {}^\gamma\!H)\to J\backslash K/H.
    \]
\end{corollary}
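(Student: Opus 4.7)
The plan is to derive the corollary directly from the preceding lemma by passing to $J$-orbits on both sides. Since $J \leq L$, the bijection $\Phi$ from \cref{lemma: restriction of K/H}, being $L$-equivariant, is automatically $J$-equivariant when we restrict the $L$-action on source and target to a $J$-action. Any equivariant bijection between two $J$-sets induces a bijection between their sets of $J$-orbits, so applying $J\backslash(-)$ to $\Phi$ yields a bijection between $J\backslash\left(\coprod_{[\gamma]\in L\backslash K/H} L/(L\cap {}^\gamma\!H)\right)$ and $J\backslash K/H$.

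The next step is to identify the left-hand orbit set with $\coprod_{[\gamma]\in L\backslash K/H} J\backslash L/(L\cap {}^\gamma\!H)$. This uses the fact that the $L$-action on the source of $\Phi$ is by left multiplication within each summand: for fixed $[\gamma]\in L\backslash K/H$ and $\ell_0\in L$, the action sends $[\ell]\in L/(L\cap {}^\gamma\!H)$ to $[\ell_0\ell]$ in the same summand. Therefore the $J$-action (restricted from the $L$-action) also preserves each summand, so $J$-orbits are computed summand by summand, giving exactly $\coprod_{[\gamma]} J\backslash L/(L\cap {}^\gamma\!H)$. Combining this identification with the bijection induced by $\Phi$ yields the stated map; that this map agrees with the one induced by the formula $[\ell]\mapsto[\ell\gamma]$ is immediate from the definition of $\Phi$.

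There is no real obstacle here: the corollary is essentially a formal consequence of having a $J$-equivariant bijection between two $J$-sets, combined with the observation that $J$-orbits of a disjoint union of $J$-sets decompose as the disjoint union of $J$-orbits of the summands. The only thing to double-check is that the $L$-action on the coproduct in \cref{lemma: restriction of K/H} acts summand-wise (so that restricting to $J$ still does), which is evident from the explicit description $[\ell]\mapsto[\ell_0 \ell]$ of left multiplication on each $L/(L\cap {}^\gamma\!H)$.
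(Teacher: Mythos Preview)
Your proposal is correct and follows exactly the paper's approach: the paper simply notes that since $J\leq L$, the $L$-equivariant bijection $\Phi$ is $J$-equivariant, and taking $J$-orbits yields the corollary. You have merely spelled out in more detail the (routine) identification of $J$-orbits of the coproduct with the coproduct of $J$-orbits, which the paper leaves implicit.
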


We need one final lemma.  Suppose that $H,L\leq K$, and $J\leq L$ is a subgroup such that there is a $k\in K$ with $J^k\subset H$. Note that the set of all $k$ such that $J^k\subseteq H$ is closed under the right $H$-action and let $B\subseteq K/H$ denote the the orbits of this set under this action.  Our goal is to describe the preimage of $B$ under $\Phi$.

For any $\gamma\in L\backslash K/H$, let 
\[
    A_{\gamma} = \{[\ell]\in L/(L \cap {}^\gamma\!H) \mid J^{\ell \gamma}\subset H \}.
\]
By definition, for any $[\ell]\in A_{\gamma}$ we have $\Phi([\ell]) = [\ell\gamma]\in B$.
\begin{lemma}\label{lemma: transfer double coset formula lemma}
    For any $H,L\leq K$ and $J\leq K$ such that there exists $k\in K$ with $J^k\subset H$, if $\Phi$ is the map from \Cref{lemma: restriction of K/H}, then
    \[
        \Phi^{-1}(B) =  \coprod\limits_{[\gamma]\in L\backslash K/H} A_{\gamma},
    \]
    where $B$ and $A_{\gamma}$ are as in the previous two paragraphs.
\end{lemma}
\begin{proof}
    We have already observed that 
    \[
        \coprod\limits_{[\gamma]\in L\backslash K/H} A_{\gamma}\subset \Phi^{-1}(B),
    \]
     so it suffices to show that for any $kH\in B$, there is a $[\gamma]\in L\backslash K/H$ and an $[\ell]\in A_{\gamma}$ so that $\Phi([\ell]) = (\ell\gamma)H = kH$.  By definition of double cosets, there must be a $[\gamma] \in L\backslash K/H$ and an $\ell\in L$ so that $(\ell\gamma) H = kH$, so it remains to check that $[\ell]\in L/(L\cap {}^\gamma\!H)$ is actually in $A_{\gamma}$. That is we need to check that $J^{\ell \gamma}\leq H$. Since $(\ell\gamma) H = kH$ there must be an $h\in H$ such that $\ell\gamma = kh$, hence we have 
     \[
        J^{\ell\gamma} = J^{kh} \leq H^h = H
     \]
     where inclusion $ J^{kh} \leq H^h$ uses the fact that $kH\in B$.
\end{proof}

We now prove a sequence of propositions which will allow us to show that $\burnghost$ is a Tambara functor (\Cref{ghost is TF}). For the rest of the section, the maps $\tr$, $\res$, $\nm$, and $c_\gamma$ always refer to the maps defined in \Cref{def:ghost}. We begin with the functoriality of transfers and norms.

\begin{proposition}[Functoriality of Transfer]\label{proposition:transfer associative}
For any $H \leq L \leq K$ and $a \in \burnghost(G/H) = \big(\prod_{I\leq H}\mathbb{Z}\big)^H$, we have $\tr_L^K \tr_H^L({a}) = \tr_H^K({a}).$
\end{proposition}

\begin{proof}
Recall that the $J$ component of $\tr_L^K( b)$ for some $J \leq K$ is:
\[
	\tr_L^K({b})_J 
		= \sum_{\substack{
					[k] \in K/L \\
					J^k \leq L
				}}
				b_{J^k}.
\]
If $ b = \tr_H^L({a})$, then this becomes:
\[
	\tr_L^K(\tr_H^L({a}))_J 
		= \sum_{\substack{
					[k] \in K/L \\
					J^k \leq L	
			}}
			\tr_H^L( a)_{J^k}
		= \sum_{\substack{
					[k] \in K/L \\
					J^k \leq L	
			}}
			\sum_{\substack{
					[\ell] \in L/H \\
					(J^k)^\ell \leq H
				}} 
				a_{(J^k)^\ell}.
\]
Letting $m = k\ell$, this simplifies to
\[
	\tr_L^K(\tr_H^L({a}))_J 
		= \sum_{\substack{
					[m] \in K/H\\
					J^m \leq H
				}}
				a_{J^m},
\]
which is exactly $\tr_H^L({a})$, as desired. 
\end{proof}

\begin{proposition}[Functoriality of Norm]\label{proposition:norm associative}
For any $H \leq L \leq K$ and ${a} \in \burnghost(G/H) = \big(\prod_{I\leq H}\mathbb{Z}\big)^H$ we have $\nm_L^K \nm_H^L({a}) = \nm_H^K({a}).$
\end{proposition}

\begin{proof}
We have
\begin{align*}
    \nm_H^K({a})_J
     &=  \prod_{[g] \in J \backslash K/H} a_{J^g \cap H} \quad 
    =  \prod_{[g] \in \bigsqcup_{[m] \in J \backslash K / L} (J^m \cap L) \backslash L / H} a_{J^g \cap H} \\
    &=  \prod_{[m] \in J \backslash K/L} \prod_{[g] \in (J^m \cap L) \backslash L / H} a_{J^{mg} \cap H}
\end{align*}
and
\begin{align*}
    \nm_L^K \nm_H^L({a})_J 
    &= \nm_L^K \left( \left( \prod_{[g] \in I \backslash L / H} a_{I^g \cap H} \right)_{I \leq L} \right)_J \\
    &=  \prod_{[m] \in J \backslash K/L} \left( \prod_{[g] \in I \backslash L/H} a_{I^g \cap H} \right)_{J^m \cap L} \\
    &=  \prod_{[m] \in J \backslash K/L} \left( \prod_{[g] \in (J^m \cap L) \backslash L/H} a_{(J^m \cap L)^g \cap H} \right)  \\
    &=  \prod_{[m] \in J \backslash K/ L} \prod_{[g] \in (J^m \cap L) \backslash L/H} a_{J^{mg} \cap L^g \cap H}  \\
    &=  \prod_{[m] \in J \backslash K/ L} \prod_{[g] \in (J^m \cap L) \backslash L/H} a_{J^{mg} \cap H} . \qedhere
\end{align*}
\end{proof}

We now check the double coset formulas.

\begin{proposition}[Additive Double-Coset Formula]\label{proposition: transfer double coset formula}
    For any $H,L\leq K$ and ${a}\in \burnghost(G/H) = \big(\prod_{I\leq H}\mathbb{Z}\big)^H$ we have
    \[
        \res^K_L\tr^K_H({a}) = \sum\limits_{[\gamma]\in L\backslash K/H} \tr^L_{L\cap {}^\gamma\!H} \res^{{}^\gamma\!H}_{L\cap {}^\gamma\!H} c_{\gamma}({a}).
    \]
\end{proposition}
\begin{proof}
    It suffices to check that the claim is true on the component of any subgroup $J\leq L$.  The $J$-th component of the left hand side is given by
    \[
    (\res^K_L\tr^K_H({a}))_J = \tr^K_H({a})_J  = \sum_{\substack{[k] \in K/H       \\ J^k\leq H}} a_{J^k} = \sum_{[k]\in B} a_{J^k},
    \]
    where $B = \{ k \in K: J^k \subseteq H\}/H$ is as above. The $J$-th component of the right hand side is given by
    \begin{align*}
         \sum\limits_{\gamma\in L\backslash K/H} (\tr^L_{L\cap {}^\gamma\!H} \res^{{}^\gamma\!H}_{L\cap {}^\gamma\!H} c_{\gamma}({a}))_J  & =  \sum\limits_{[\gamma]\in L\backslash K/H}\ \sum_{\substack{[\ell] \in L/(L\cap ^{\gamma}H)       \\ J^{\ell}\leq ^{\gamma}H}} a_{J^{\ell\gamma}}\\
         & = \sum\limits_{[\gamma]\in L\backslash K/H}\ \sum\limits_{[\ell]\in A_{\gamma}}\ \, a_{J^{\ell\gamma}}.
    \end{align*}
    As $\bigsqcup_{[\gamma] \in L \backslash K/H} A_\gamma = \Phi^{-1}(B)$ is the inverse image of $B$ under the bijection $\Phi$ by \Cref{lemma: transfer double coset formula lemma}, the two sums are equal. \end{proof}
    
\begin{proposition}[Multiplicative Double-Coset Formula]\label{proposition: norm double coset formula}
    For any $H,L\leq K$ and ${a}\in \burnghost(G/H) = \big(\prod_{I\leq H}\mathbb{Z}\big)^H$ we have
    \[
        \res^K_L\nm^K_H({a}) = \prod\limits_{[\gamma]\in L\backslash K/H} \nm^L_{L\cap {}^\gamma\!H} \res^{{}^\gamma\!H}_{L\cap {}^\gamma\!H} c_{\gamma}({a}).
    \]
\end{proposition}
\begin{proof}
     It suffices to check that the claim is true on the component of any subgroup $J\leq L$.  The $J$-th component of the left hand side is given by
    \[
    (\res^K_L\nm^K_H({a}))_J = \nm^K_H({a})_J  =  \prod_{[k]\in J\backslash K/H} a_{J^k\cap H},
    \]
    while the $J$-th component of the right hand side is given by
    \begin{align*}
         \prod\limits_{[\gamma]\in L\backslash K/H} (\nm^L_{L\cap {}^\gamma\!H} \res^{{}^\gamma\!H}_{L\cap {}^\gamma\!H} c_{\gamma}({a}))_J  & =  \prod\limits_{[\gamma] \in L\backslash K/H} \ \prod\limits_{[\ell]\in J\backslash L/(L\cap ^{\gamma H})} a_{(J^g\cap L\cap ^{\gamma}H)^{\gamma}} \\
         &= \prod\limits_{[\gamma] \in L\backslash K/H}\  \prod\limits_{[\ell]\in J\backslash L/(L\cap ^{\gamma H})}a_{J^{\ell \gamma}\cap H}.
    \end{align*}
    These two products are equal by \Cref{corollary: norm double coset formula corollary}.
\end{proof}

We now check that compatibility of the transfers and multiplicative structure.  First we check Frobenius reciprocity.
\begin{proposition}[Frobenius Reciprocity]\label{proposition Frobenius reciprocity}
    Let $H\leq K\leq G$ be a chain of subgroups.  For any ${a}\in \ghost(\uA_{G})(G/H)$ and ${b}\in \ghost(\uA_G)(G/K)$ we have 
    \[
        \tr^K_H({a})\cdot {b} 
        =
        \tr^K_H({a}\cdot \res^K_H({b})) 
        \quad \mathrm{and} \quad  
         {b} \cdot \tr^K_H({a})
        =
        \tr^K_H( \res^K_H({b})\cdot {a}) 
    \]
\end{proposition}
\begin{proof}
    We check the left relation; the right one follows by commutativity.  It suffices to check that the equation in the statement holds componentwise.  For any subgroup $I\leq K$ we have 
    \[
        \left(\tr^K_H({a})\cdot {b}\right)_I 
        = 
        \tr^K_H({a})_I\cdot {b}_I 
        =
        \sum_{\substack{[k] \in K/H       \\ I^k\leq H}} a_{I^k} \cdot  b_I 
        =
        \sum_{\substack{[k] \in K/H       \\ I^k\leq H}}{a}_{I^k} \cdot {b}_{I^k} 
        =
        \left(\tr^K_H({a}\cdot \res^K_H({b}))\right)_I
    \]
    where the last equality uses the fact that ${b}_{I^k} = {b}_I$ for any $k\in K$ since ${b}\in \ghost(\uA_G)(G/K)$. 
\end{proof}

Finally we check the most difficult relation, Tambara reciprocity, which relates norms and transfers.  
Recall the notation from \cref{def:tf}:  if $f\colon X\to Y$ is an equivariant function between finite $G$-sets, we write $f^*$, $f_{\times}$, and $f_+$ for restriction, norm, and transfer along $f$ in the ghost. In this notation, Tambara reciprocity refers to \cref{Tambara axioms}\ref{tambara reciprocity}.

\begin{proposition}[Tambara Reciprocity]\label{proposition: Tamb Rec}

For any proper subgroup $K < G$ and equivariant functions $t\colon A\to G/K$, and $n\colon G/K\to G/G$ that there is an equality of operations
\begin{equation}\label{equation: exponential equation}
        n_{\times}t_{+} = b_+p_{\times}e^*
    \end{equation}
where $b$, $p$ and $e$ fit into an exponential diagram
\[\begin{tikzcd}
	& E & F \\
	A & {G/K} & {G/G.}
	\arrow["p", from=1-2, to=1-3]
	\arrow["e"', from=1-2, to=2-1]
	\arrow["b", from=1-3, to=2-3]
	\arrow["t"', from=2-1, to=2-2]
	\arrow["n"', from=2-2, to=2-3]
\end{tikzcd}.\]
\end{proposition}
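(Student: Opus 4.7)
The plan is to reduce general Tambara reciprocity to the displayed special case and then verify the latter by expanding both sides componentwise and matching via an explicit bijection.

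First, using additivity of $t_{+}$, I would reduce to the case $A = G/L$ with $L \leq K$ and $t\colon G/L \to G/K$ the canonical quotient map. The exponential diagram for $G/L \to G/K \to G/G$ then admits an explicit description: $F$ is the $G$-set of sections of $t$ (identified with a suitable set of $K$-equivariant maps $G/K \to G/L$), $E \cong F \times_{G/G} G/K$, and $b, p, e$ are the obvious projections and the evaluation map.

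Next, I would fix a subgroup $J \leq G$ and expand the $J$-component of both sides of \cref{equation: exponential equation}. Applying the norm and transfer formulas of \cref{def:ghost} directly, the left-hand side becomes
\[
    (n_{\times} t_{+} \vec a)_J = \prod_{[g] \in J\backslash G/K}\ \sum_{\substack{[k]\in K/L \\ (J^g \cap K)^k \leq L}} a_{(J^g \cap K)^k}.
\]
On the right, expanding $b_{+} p_{\times} e^{*} \vec a$ via the same formulas yields an analogous sum-of-products indexed by the orbit and fiber structure of the $G$-sets $F$ and $E$.

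The main obstacle is matching these two expressions. Distributing the product of sums on the left produces a single sum indexed by tuples $(k_g)_{[g]\in J\backslash G/K}$. Such a tuple records a $J$-equivariant choice of section of $t$ over each $J$-orbit in the single fiber $G/K$ of $n$, and so corresponds precisely to a $J$-fixed point of $F$. This provides a bijection between the index sets of the two expansions; confirming that the subgroup indices of $\vec a$ (which come from stabilizer calculations) agree term-by-term under this bijection is the delicate bookkeeping step. I expect this to be the main technical hurdle, but it is essentially the combinatorial identity underlying Tambara reciprocity in $\uA_G$ itself, now translated into the explicit product-and-sum formulas governing $\ghost(\uA_G)$.
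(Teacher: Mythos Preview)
Your reduction step has a gap. You write ``using additivity of $t_{+}$, I would reduce to the case $A = G/L$,'' but this does not work: although $t_{+}$ is additive, the composite $n_{\times} t_{+}$ is not, because $n_{\times}$ is only multiplicative. Concretely, if $A = A_1 \sqcup A_2$ then $n_{\times} t_{+}(\vec a_1, \vec a_2) = n_{\times}\big((t_1)_{+}\vec a_1 + (t_2)_{+}\vec a_2\big)$, which is not a sum of the two orbit contributions. On the other side, the exponential object $F$ for $A_1 \sqcup A_2 \to G/K \to G/G$ is likewise not the disjoint union of the exponentials for $A_1$ and $A_2$. So knowing \eqref{equation: exponential equation} for each orbit separately does not yield it for general $A$ by additivity alone; the ``norm of a sum'' case is genuinely additional content. (Your later remark about tuples $(k_g)_g$ indexing $J$-fixed sections is in fact the germ of an argument that works for \emph{arbitrary} $A$ without any reduction, so the reduction step is both incorrect and unnecessary.)

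The paper takes a quite different route, which sidesteps the direct combinatorial matching you anticipate as the main hurdle. It proceeds by induction on $\lvert G\rvert$ and verifies \eqref{equation: exponential equation} one component at a time. For a \emph{proper} subgroup $H<G$, restricting along $r\colon G/H\to G/G$ and chasing a diagram of pullback and exponential squares reduces the $H$-component of the identity to an instance of Tambara reciprocity living entirely over proper subgroups, where it holds by induction (together with the already-established double coset formulas). For the $G$-component, one only needs the generating cases $A=G/L$ with $L<K$ and $A=G/K\sqcup G/K$ with $t$ the fold; both are immediate because proper transfers vanish in the $G$-coordinate and $\nm_K^G(\vec a)_G = a_K$. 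This approach trades your explicit bijection for structural bookkeeping in the polynomial category, and in particular never confronts the general section-counting identity head-on.
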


\begin{proof}
Generally, this problem is unwieldy because the exponential diagrams can be difficult to get a handle on.  In this case, the definition of the ghost allows us to check the formula component-wise, i.e., it suffices to check that the formula holds after projecting to the $H$-component of $\burnghost(G/G)$ for every subgroup $H\leq G$. 

First, suppose that $H< G$ is a proper subgroup.  Write $r\colon G/H\to G/G$ for the collapse map.  Since the restrictions in the ghost are given by projections, we see that the formula \eqref{equation: exponential equation} holds on the $H$-component if and only if the equation holds after composition with $r^*$.  That is, it suffices to prove 
\begin{equation}\label{equation: exponential equation 2}
    r^*n_{\times}t_{+} = r^*b_+p_{\times}e^*.
\end{equation}
Consider the diagram
\[
\begin{tikzcd}
	&& C \\
	& E & F & B \\
	A & {G/K} & {G/G} \\
	J & I && {G/H} \\
	& D && L
	\arrow["d"', from=1-3, to=2-2]
	\arrow["{(P)}"{description}, draw=none, from=1-3, to=2-3]
	\arrow["c", from=1-3, to=2-4]
	\arrow["p", from=2-2, to=2-3]
	\arrow["e"', from=2-2, to=3-1]
	\arrow["{(E)}"{description}, draw=none, from=2-2, to=3-3]
	\arrow[""{name=0, anchor=center, inner sep=0}, "b"', from=2-3, to=3-3]
	\arrow["m", from=2-4, to=2-3]
	\arrow[""{name=1, anchor=center, inner sep=0}, "a", from=2-4, to=4-4]
	\arrow["t", from=3-1, to=3-2]
	\arrow[""{name=2, anchor=center, inner sep=0}, "n", from=3-2, to=3-3]
	\arrow["{(P)}"{description}, draw=none, from=3-2, to=4-1]
	\arrow["h", from=4-1, to=3-1]
	\arrow["q"', from=4-1, to=4-2]
	\arrow["g"', from=4-2, to=3-2]
	\arrow[""{name=3, anchor=center, inner sep=0}, "f"', from=4-2, to=4-4]
	\arrow["r"', from=4-4, to=3-3]
	\arrow["j", from=5-2, to=4-1]
	\arrow[""{name=4, anchor=center, inner sep=0}, "k"', from=5-2, to=5-4]
	\arrow["\ell", from=5-4, to=4-4]
	\arrow["{(P)}"{description}, draw=none, from=0, to=1]
	\arrow["{(P)}"{description}, draw=none, from=2, to=3]
	\arrow["{(E)}"{description}, draw=none, from=3, to=4]
\end{tikzcd}
\]
where each polygon is labeled by $(P)$ or $(E)$ corresponding to whether it is a pullback or an exponential diagram in finite $G$-sets.  

The outer maps of the diagram between $A$ and $G/H$ represent two different ways to resolve 
\[
    A \xrightarrow{t} G/K \xrightarrow{n} G/G\xleftarrow{r} G/H
\]
into a TNR morphism in the polynomial category: either $A \xleftarrow{hj} D \xrightarrow{k} L \xrightarrow{\ell} G/H$ or $A \xleftarrow{ed} C \xrightarrow{c} B \xrightarrow{a} G/H$.
Since composition is well-defined in the polynomial category, these two spans are isomorphic hence there are equivariant bijections
\[
    C\leftrightarrow D,\quad B\leftrightarrow L
\]
and through these bijections we can identify 
\[
    hj = ed, \quad k=c, \, \, \, \, \text{and} \quad \ell=a.
\]
Now, since $I$ and $J$ have equivariant maps to $G/K$ and $K$ is proper, neither of these $G$-sets has a fixed point.  In particular, we obtain the formula
\[
    \ell_{+}k_{\times}j^* = f_{\times}q_+
\]
by our inductive hypothesis that the ghost is a Tambara functor for all proper subgroups of $G$.  Using this, and the fact that we have already established the double coset formulas, we have
\begin{align*}
    r^*n_{\times}t_{+}  = f_{\times} g^*t_{+}   = f_{\times}q_+h^* 
     = \ell_+k_{\times}j^*h^*   = a_+c_{\times}d^*e^*
     = a_+m^*p_{\times}e^* = r^*b_+p_{\times}e^*
\end{align*}
establishing the formula \eqref{equation: exponential equation 2} for all proper subgroups $H<G$.

Finally, we need to check that exponential relations hold for the $G$-component.  It suffices, in this case, to establish the result for the two cases $A= G/L$ for $L<K$ a proper subgroup, and for $A= G/K\amalg G/K$ and $t$ the fold map.  In the case $A = G/L$, the left hand side of \eqref{equation: exponential equation} is always zero, since $\nm^G_K({a})_G = a_K$ and $\tr^K_L({b})_K=0$.  By \cite[Theorem 2.8]{mazur:2019}, the right hand side is a sum of proper transfers and hence will also have zero in the $G$-component, thus establishing \eqref{equation: exponential equation} when $A = G/L$.  When $A= G/K\amalg G/K$ we are interested in the formula for $\nm^G_{K}({a}+{b})$.  The $G$-component is given by $a_K+b_K$.  By \cite[Theorem 2.4(2)]{mazur:2019}, the right hand side is given by $\nm^G_{K}({a})+\nm^G_K({b})+T$ where $T$ is a term which is a sum of proper transfers.  In particular, the $G$-component of $T$ is zero, and so the $G$-component of the two sides of \eqref{equation: exponential equation} agree when $A = G/K\amalg G/K$.
\end{proof}

We now have all of the ingredients needed to prove the main theorem of this section. 

\begin{theorem}\label{ghost is TF}
    The ghost of the Burnside Tambara functor $\burnghost$ (\Cref{def:ghost}) is a Tambara functor.  
\end{theorem}
\begin{proof}
    It suffices to check the axioms given in \cref{Tambara axioms}. By induction, we may assume that the claim is true for any proper subgroup $H<G$. The base case, $\tilde{A}(e) \cong \ZZ$, is trivial. Thus we assume that all the necessary formulas which do not involve the group $G$ are satisfied, since these relations are the same as those in the ghost for some maximal proper subgroup of $G$. 

    Functoriality of restrictions and conjugations follows from their definition, while functoriality of transfers and norms follows from \Cref{proposition:transfer associative,proposition:norm associative}, respectively. One can check directly that the conjugacy axiom (\cref{Tambara axioms}\ref{conjugacy}) holds. The double coset formulas for transfer and norm are \Cref{proposition: transfer double coset formula,proposition: norm double coset formula}, respectively. Frobenius reciprocity is checked in \Cref{proposition Frobenius reciprocity} and Tambara reciprocity is shown in \Cref{proposition: Tamb Rec}.
    \end{proof}

With \Cref{ghost is TF} in hand, we can prove the following important fact.

\begin{proposition}\label{prop:ghost is map}
    The ghost map $\ghostmap\colon \burn_G\to \burnghost$ is a morphism of Tambara functors.
\end{proposition}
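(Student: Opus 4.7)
The plan is to verify that $\ghostmap$ commutes with each class of Tambara structure map---restriction, conjugation, transfer, and norm---between $\burn_G$ and $\burnghost$. Because each $\varphi_H = \ghostmap_{G/H}$ is already known to be a ring homomorphism (Burnside's theorem, recalled in \cref{SubSec: The Burnside Tambara Functor}), compatibility with addition and multiplication at each level is immediate. Since all four structure maps are additive, it suffices to verify each intertwining relation on a generic finite $H$-set $X$, as such sets generate $\burn_G(G/H) = A(H)$ additively.

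Compatibility with restriction and conjugation is essentially formal: for $K \leq H$, the composite $\varphi_K \circ \res_K^H$ sends a finite $H$-set $X$ to the tuple $(|X^L|)_{L \leq K}$, which agrees with $\res_K^H \varphi_H(X)$ by inspection of \cref{def:ghost}. Likewise, the elementary identity $|({}^g\!X)^J| = |X^{J^g}|$ realizes compatibility with conjugation. The substantive content lies in the transfer and norm cases, each of which reduces to a classical double-coset formula for fixed points. For transfer, one must verify
\[
    |(H \times_K X)^I| = \sum_{\substack{[h] \in H/K \\ I^h \leq K}} |X^{I^h}|
\]
for every $K \leq H$, finite $K$-set $X$, and $I \leq H$: an $I$-fixed class $[h,x] \in H \times_K X$ forces $I^h \leq K$ and $x \in X^{I^h}$, and two such classes coincide exactly when their $H$-components agree modulo $K$. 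For norms, the required identity is
\[
    |\operatorname{Map}_K(H, X)^I| = \prod_{[h] \in I \setminus H / K} |X^{I^h \cap K}|,
\]
which follows because an $I$-invariant, $K$-equivariant map $f\colon H \to X$ is uniquely and freely determined by a choice of $f(h) \in X^{I^h \cap K}$ for each representative of a double coset in $I \setminus H / K$.

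The main technical step is the norm compatibility, where careful bookkeeping is needed to verify that the stabilizer constraint $I^h \cap K$ arises from well-definedness of $f$ on the double coset $IhK$, and that the count is independent of representative choices (since $I^h \cap K$ changes by conjugation when $h$ is replaced by another representative, but the cardinality of the corresponding fixed-point set does not). Once these identities are in place, the formulas in \cref{def:ghost} match exactly, showing that $\ghostmap$ commutes with all Tambara structure maps.
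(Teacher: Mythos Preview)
Your direct-verification strategy is sound and genuinely different from the paper's proof, but there is a gap in your reduction step. You assert that ``all four structure maps are additive,'' and therefore checking on honest finite $K$-sets suffices; this is false for the norm, which is only multiplicative. Agreement of $\ghostmap_{G/H}\circ\nm_K^H$ and $\nm_K^H\circ\ghostmap_{G/K}$ on the additive generators of $A(K)$ does not extend by linearity. The fix is not difficult: both composites are polynomial functions $A(K)\to\tilde A(H)$ of degree $[H:K]$ (a general feature of Tambara norms, or checkable directly from the formulas), and the submonoid of effective classes is Zariski-dense in the free abelian group $A(K)$, so agreement there forces agreement everywhere. Alternatively, you can phrase this as verifying a morphism of semi-Tambara functors on the semiring of finite $G$-sets and then invoking that group-completion is left adjoint. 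With that patched, your fixed-point identities for transfer and norm are correct and the argument goes through.

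The paper takes a shorter route that sidesteps the norm computation entirely: since $\burn_G$ is the initial $G$-Tambara functor, there is a \emph{unique} Tambara morphism $\Psi\colon\burn_G\to\burnghost$, and one need only check that $\Psi$ agrees with $\ghostmap$ on the additive generators $[G/H]\in A(G)$. Writing $[G/H]=\tr_H^G(1)$ and using only the ghost transfer formula, one computes $\Psi_G([G/H])_I=\tr_H^G(1)_I=\lvert(G/H)^I\rvert=\varphi_G^I(G/H)$, and linearity (legitimate here, since both $\Psi_G$ and $\ghostmap_G$ are ring maps) finishes the argument. This buys brevity and avoids the density argument; your approach, by contrast, is more explicit and exhibits the classical fixed-point formulas underlying each structure map, which has its own expository value.
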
\begin{proof}
    Since the Burnside Tambara functor is initial in the category of Tambara functors, there is a unique map $\Psi_G \colon \burn_G\to \burnghost$ of Tambara functors, which we show coincides with the ghost map. We prove the claim at level $G/G$, but all the other levels follow similarly. For all $H\leq G$, we have
    \[
    \Psi_G(G/H) = \Psi_G(\tr_H^G(H/H)) = \tr_H^G(\Psi_H(H/H)) = \tr_H^G(1),
    \]
    since $\Psi$ commutes with transfers and is a levelwise ring map.  Using the formulas in \Cref{def:ghost}, we see that for any $I\leq G$ 
    \[
    \tr^G_H(1)_I 
    = 
    \sum_{{\gamma\in G/H, ~~I^{\gamma}\leq H}} 1 
    = 
    \abs{(G/H)^I} 
    = 
    \varphi_G^I(G/H),
    \]
    where the second equality uses the bijections
    \[
        (G/H)^I\leftrightarrow \mathrm{Set}^G(G/I,G/H)\leftrightarrow \{\gamma\in G/H\mid I^{\gamma}\leq H\}.
    \]
    Thus $\Psi_G(G/H)  = \ghostmap_G(G/H)$ for every $H\leq G$. Extending linearly, $\Psi_G = \chi_G$ on all of $\burn_G(G/G)$. 
\end{proof}

\begin{remark}
    While we have chosen to present a self-contained proof of \cref{ghost is TF}, the reader familiar with the twin functor construction of \cite{Thev:88} may expedite  the proof by appealing to \cite[Theorem 4.1]{Thev:88} and recognizing that $\burnghost$ agrees with the twin construction of the Burnside functor. This shows that $\burnghost$ is a Green functor; after this shortcut, the rest of the proof of \cref{ghost is TF} only requires checking those Tambara relations involving norms. 
    
    Carefully checking that the twin and the ghost agree, however, amounts to the nearly the same effort as checking directly that $\burnghost$ is a Green functor. We chose to present the latter for ease of exposition.
\end{remark}

\section{Computing the Spectrum of the Burnside Functor}

Inspired by the definition of the ideals $\K_{H,p}$ from \cite[Section 3]{CalleGinnett:23}, we introduce a family of ideals in $\burnghost$. First, recall that a non-empty set of subgroups of $G$ is called a \emph{family of subgroups} if it is closed under conjugation and taking subgroups.

\begin{definition}\label{defn:PFp}
    Let $\sr{F}$ be a family of subgroups of $G$ and $p \in \ZZ$ a prime or $0$. Recalling our \Cref{notation: ideals of A tilde} for ideals of $\tilde{A}$, define $\P_{\sr F, p}\subseteq \burnghost$ by
    \[
    	\P_{\sr F, p}(G/H) = ( \delta_{\sr{F}, p}(I))_{I\leq H}
    \] 
    where
    \[
   		\delta_{\sr{F}, p}(I) = 
		\begin{cases}
        	p & I\in \sr{F}, \\
	        1 & I\not\in \sr{F}.
    	\end{cases}
    \]
\end{definition}

Using the explicit formulas from \Cref{def:ghost}, one can show that $\P_{\sr{F}, p}$ is always an ideal of $\burnghost$, i.e. that it is closed under the structure maps. As a special case, we consider the family $\sr{F}_H = \{I \mid I\preccurlyeq_G H\}$ for some fixed $H\leq G$, in which case we write $\P_{\sr{F}_H, p} = \P_{H,p}$ and similarly $\delta_{\sr{F}_H,p} = \delta_{H,p}$; note that $\P_{H,p}$ only depends on the conjugacy class of $H$ in $G$.

\begin{theorem}\label{thm:prime iff subgp fam}
    The ideal $\P_{\sr{F}, p}$ is prime if and only if $\P_{\sr{F}, p}= \P_{H,p}$ for some $H\leq G$.
\end{theorem}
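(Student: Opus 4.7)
The plan is to handle the two directions of the equivalence separately. For the backward implication I verify the $Q$-condition of primality directly with one judicious choice of parameters, while for the forward implication I argue the contrapositive by exhibiting explicit elements that witness the failure of primality.

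For the backward direction, assume $\P_{\sr{F},p}=\P_{H,p}$ for some $H\leq G$, so that $\sr{F}=\{I\preccurlyeq_G H\}$. The non-unit condition is immediate since $\{e\}\in\sr{F}$ forces $1\notin\P_{H,p}(G/K)$ for every $K$. For the $Q$-condition, suppose toward a contradiction that $Q(\P_{H,p},a,b)$ holds for some $a\in\burnghost(G/H_1)$, $b\in\burnghost(G/H_2)$ with $a\notin\P_{H,p}(G/H_1)$ and $b\notin\P_{H,p}(G/H_2)$. Then there exist $I_i\leq H_i$ with $I_i\preccurlyeq_G H$ and $p\nmid a_{I_1}$, $p\nmid b_{I_2}$. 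Pick $g_i\in G$ with ${}^{g_i}I_i\leq H$ and apply the $Q$-condition at $L=H$, $K_i=I_i$, with these $g_i$. The key calculation is that $H\backslash H/{}^{g_i}I_i=\{[e]\}$, so applying the formulas in \cref{def:ghost} collapses the $H$-component of $\nm^H_{{}^{g_i}I_i}(\conj_{g_i,I_i}\res^{H_i}_{I_i}(a))$ to $a_{I_i}$. The $H$-component of the product is therefore $a_{I_1}b_{I_2}$, and since $H\in\sr{F}$ the $Q$-condition forces $p\mid a_{I_1}b_{I_2}$ (or $a_{I_1}b_{I_2}=0$ when $p=0$), which contradicts $p\nmid a_{I_1}b_{I_2}$.

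For the forward direction I argue by contrapositive. If $\sr{F}$ is not of the form $\sr{F}_H$ for any $H$, then $\sr{F}$ has at least two distinct maximal $G$-conjugacy classes of subgroups, with representatives $M_1,M_2$ satisfying $M_i\not\preccurlyeq_G M_j$ for $i\neq j$. Consider the indicator functions $a=\mathbf{1}_{[M_1]}$ and $b=\mathbf{1}_{[M_2]}$ in $\tilde{A}(G)=\burnghost(G/G)$, where $a_I=1$ if $I\conjugate{G} M_1$ and $a_I=0$ otherwise, and analogously for $b$. These elements are $G$-fixed because the relevant conjugacy classes are $G$-stable, and neither lies in $\P_{\sr{F},p}(G/G)$ since $a_{M_1}=b_{M_2}=1$ and $M_i\in\sr{F}$. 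To verify $Q(\P_{\sr{F},p},a,b)$, I examine the $J$-component of the product for each $J\in\sr{F}$ with $J\leq L$: for the $a$-norm's $J$-component to be nonzero modulo $p$, every factor $a_N$ in the defining product must be nonzero, so in particular the identity double-coset contribution forces $J\cap{}^{g_1}K_1\conjugate{G} M_1$, hence $M_1\preccurlyeq_G J$. Similarly, a nonzero $b$-norm forces $M_2\preccurlyeq_G J$. But $J\in\sr{F}$ forces $J\preccurlyeq_G M$ for some maximal $M\in\sr{F}$, so $M_1,M_2\preccurlyeq_G M$, which contradicts the maximality and distinctness of $[M_1],[M_2]$. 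Hence at least one factor vanishes modulo $p$, $Q(\P_{\sr{F},p},a,b)$ holds, and $\P_{\sr{F},p}$ is not prime.

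The main technical step is the norm reduction in the backward direction, which hinges on the triviality of the double coset $H\backslash H/{}^{g}I$ when ${}^{g}I\leq H$. The forward direction is essentially combinatorial, turning on the observation that a nonzero component of a norm in $\burnghost$ at $J$ forces the underlying subgroup of the norm to contain a conjugate that sits inside $J$.
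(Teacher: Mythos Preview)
Your proof is correct. The forward (contrapositive) direction is essentially the paper's argument: both exhibit indicator-type elements supported on two incomparable maximal members of $\sr{F}$ and show $Q$ holds because any $J$-component that survives forces $M_1,M_2\preccurlyeq_G J$, pushing $J$ outside $\sr{F}$. The only cosmetic difference is that the paper places its witnesses $a,b$ at levels $G/N$ and $G/N'$ (indicator of the top subgroup), whereas you place yours at level $G/G$ (indicator of the conjugacy class); the combinatorics is identical.

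For the backward direction the paper simply cites \cite[Theorem 3.8]{CalleGinnett:23}, while you supply a direct argument. Your trick of instantiating the $Q$-condition at $L=H$ so that $H\backslash H/{}^{g_i}I_i$ collapses to a single coset, reducing the $H$-component of each norm to $a_{I_1}$ (resp.\ $b_{I_2}$), is clean and self-contained; it avoids appealing to the earlier paper and makes the present work more self-sufficient.
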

\begin{proof}
    The ``if'' direction holds by the proof of \cite[Theorem 3.8]{CalleGinnett:23}; although the ideals considered there are $\ghostmap^*\P_{H,p}$, rather than $\P_{H,p}$, the proof in fact establishes that the ideals $\P_{H,p}$ are prime (the intersection with the image of the ghost map only occurs in the final step). 
    
    We show the other implication holds via the contrapositive. Suppose that $\P=\P_{\sr{F},p}$ for a family $\sr{F}$ which is not of the form $\{I \mid I\preccurlyeq_G H\}$ for some $H\leq G$. This means there is some $N,N'\in \sr{F}$ so that for any $L\leq G$ with $N,N'\preccurlyeq_G L$, then $L\not\in \sr{F}$. Consider the elements $a\in \burnghost(G/N)$ and $b\in \burnghost(G/N')$ given by\[
    a_I = \begin{cases}
         0 & I<N, \\
         1 & I=N,
    \end{cases} 
    \quad  \text{ and } \quad 
    b_J = \begin{cases}
         0 & J<N', \\
         1 & J=N',
    \end{cases}
    \] and observe that $a,b \not\in \P$. We show $Q(\P, a,b)$ holds and hence $\P = \P_{\sr{F},p}$ is not prime. Let $K\leq N$, $K' \leq N'$, $g,g'\in G$, and $K^g, K'^{g'}\leq L$ and consider the generalized product \begin{align*}
          &\Big(\nm_{K^g}^{L}\circ \conj_{g, K}\circ \res_{K}^{N}(a)\Big)
                  \cdot\left(\nm_{K'^{g'}}^{L} \circ \conj_{g', K'} \circ \res_{K'}^{N'}(b)\right)\\
                  &= \nm_{K^g}^{L}\Big((a_{I^{g^{-1}}})_{I\leq K^g}\Big)
                  \cdot \nm_{K'^{g'}}^{L}\left((b_{I^{g'^{-1}}})_{I\leq K'^{g'}}\right)\\
                  &= \left(\left(\prod_{\gamma \in J\setminus L/K^g} a_{J^{\gamma g^{-1}}\cap K}\right)\cdot \left(\prod_{\gamma' \in J\setminus L/K'^{g'}} b_{J^{\gamma' g'^{-1}}\cap K'}\right)\right)_{J\leq L}.
    \end{align*} We want to show that this element is in $\P(G/L) = (p_J)_{J\leq L}$. The $J^{th}$ component of this element is 0 unless $J^{\gamma g^{-1}}\cap K =N$ and $J^{\gamma' g'^{-1}}\cap K' = N'$ for all $\gamma, \gamma'$. This means we must have $K=N$ and $K'=N'$, and moreover $N,N'\preccurlyeq_G J$, which then implies that $J\not\in \sr{F}$ and so $1\in (p_J) = \ZZ$, as desired.
\end{proof}

By \cref{prop:prime at G/e}, a prime ideal $\P$ of $\burnghost$ satisfies $\P(G/e)=(p)$ for some $p\in \ZZ$ either prime or zero; we say that $\P$ is a \textit{prime ideal over $p$.}

We prove the main theorem (\Cref{thm:containment all}) by showing that if $\P$ is a prime ideal over $p$, then $\P = \P_{\sr{F}, p}$ for some family $\sr{F}$. \Cref{thm:prime iff subgp fam} then implies that there is some $H\leq G$ so that $\P = \P_{H,p}$. Before doing so, we observe some containment rules.

\begin{proposition}\label{prop:ghost containments}
    For any finite group $G$, subgroups $H,K\leq G$, and $p,q$ prime, there are containments:\begin{enumerate}[(i)]
        \item $\P_{K,0}\subseteq \P_{H,0}$ if and only if $H\preccurlyeq_G K$,
        \item $\P_{H,0}\subseteq \P_{H,p}$ and $\P_{H, p}\not\subseteq \P_{K,0}$, and
        \item $\P_{K,p}\subseteq \P_{H,q}$ if any only if $p=q$ and $H\preccurlyeq_G K$. 
    \end{enumerate}
\end{proposition}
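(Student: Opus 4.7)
The plan is to reduce all three claims to statements about component-wise divisibility of integers in $\tilde{A}(L)$, and then to verify each by inspecting a single well-chosen component.

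First, I would observe that for any level $L$, the ideal $\P_{H,p}(G/L)$ is described by the tuple of generators $(\delta_{H,p}(I))_{I\leq L}$ via \Cref{notation: ideals of A tilde}. Since the ideal $\prod n_I\ZZ$ of $\tilde{A}(L)$ is contained in $\prod m_I\ZZ$ precisely when $m_I\mid n_I$ for every $I$, the containment $\P_{H,p}\subseteq \P_{K,q}$ is equivalent to the system
\[
    \delta_{K,q}(I)\,\bigm|\,\delta_{H,p}(I) \quad \text{for every subgroup } I \leq G,
\]
where taking $L=G$ captures every subgroup and larger levels give no new information. The only nontrivial constraint arises at subgroups $I\preccurlyeq_G K$, since $\delta_{K,q}(I) = 1$ otherwise.

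Next, I would dispatch each claim by restricting this system to the relevant case. For (i), the constraint at $I\preccurlyeq_G K$ becomes $\delta_{H,0}(I) = 0$, i.e., $I\preccurlyeq_G H$; specializing to $I = K$ forces $K\preccurlyeq_G H$, and transitivity of subconjugacy gives the converse. For (iii), the constraint at $I\preccurlyeq_G K$ becomes $q\mid\delta_{H,p}(I)$, which for prime $q$ only holds when $\delta_{H,p}(I) = p$, forcing both $I\preccurlyeq_G H$ and $p=q$; evaluating at $I = K$ gives the necessary conditions, and transitivity together with $p=q$ gives the converse. For (ii), the containment $\P_{H,0}\subseteq \P_{H,p}$ is immediate componentwise because $\{0\}\subseteq p\ZZ$. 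The failure of $\P_{H,p}\subseteq \P_{K,0}$ is witnessed at the trivial subgroup $I = e$, which is subconjugate to every subgroup, so $\delta_{H,p}(e) = p$ but $\delta_{K,0}(e) = 0$, giving $p\ZZ\not\subseteq \{0\}$.

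I anticipate no substantive obstacle; the only point to be careful about is that ideal containment reverses integer divisibility, and that a generator equal to $1$ corresponds to an automatically satisfied constraint. Once the componentwise reduction is in hand, all three parts follow from reading off the correct divisibility condition at the single component $I = K$ (for parts (i) and (iii)) or $I = e$ (for part (ii)).
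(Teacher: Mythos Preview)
Your proposal is correct and follows essentially the same approach as the paper: reduce the Tambara-ideal containment to the componentwise divisibility condition $\delta_{K,q}(I)\mid\delta_{H,p}(I)$ at level $G/G$, then read off the answer at a well-chosen component (the paper checks only the ``only if'' direction of (iii) at $I=K$ as an indication of the strategy, whereas you carry out all three parts). The one phrasing to tighten is ``larger levels give no new information'': what you mean is that the constraints at level $G/L$ for $L\leq G$ are a subset of those at level $G/G$, so it suffices to check level $G/G$.
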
\begin{proof}
    The claims are all straightforward to check; we include a proof of the ``only if'' direction of (iii) to indicate the general strategy. Suppose $\P_{K,p}\subseteq \P_{H,p}$. Then at level $G$, we have 
	\[
		\big( \delta_{K,p}(I) \big)_{I \leq G} \subseteq \big( \delta_{H,p}(I) \big)_{I \leq G}
	\]
where $\delta_{K,p}(I)$ is $p$ if $I\preccurlyeq_G K$ and $1$ otherwise. Then $\delta_{H,p}(H) = p$ so $\delta_{K,p}(H)=p$ as well and hence $H\preccurlyeq_G K$.
\end{proof}

We now argue that every prime ideal in $\burnghost$ is of the form $\P_{K,p}$ for some $K\leq G$ and some $p \in \ZZ$ either a prime or $0$. Suppose $\P\in \Spec(\burnghost)$ is a prime ideal over $p$. Since $\P$ is a levelwise ideal, we may write
\[
	\P(G/H) = (\n{I}{H})_{I\leq H}
\] 
for $H\leq G$, $\n{I}{H} \in \ZZ$. In particular, $n_{e,e} = p \in \ZZ$ since $\P$ is a prime over $p$.

\begin{lemma}\label{lem:componentwise prime}
    For all $I\leq H\leq G$, the ideal $(\n{I}{H})\subseteq \ZZ$ is prime or the entire ring.
\end{lemma}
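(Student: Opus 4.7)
My plan is to show that if $(n_{I,H}) \neq \ZZ$, then it is prime, by leveraging the primality of $\P$ via the condition $Q(\P, -, -)$. Set $n := n_{I,H}$ and suppose $xy \in (n)$ for some $x, y \in \ZZ$. Let $e_I \in \burnghost(G/H)$ denote the element with $(e_I)_{I'} = 1$ if $I' \conjugate{H} I$ and $0$ otherwise; this is $H$-invariant, so $e_I \in \burnghost(G/H)$. Define $a := x \cdot e_I$ and $b := y \cdot e_I$. By construction, $a \in \P(G/H)$ iff $x \in (n)$, and similarly for $b$. It therefore suffices to show $Q(\P, a, b)$ holds: primality of $\P$ will then force $a \in \P(G/H)$ or $b \in \P(G/H)$, yielding $x \in (n)$ or $y \in (n)$.

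To verify $Q$, fix arbitrary $L, K_1, K_2, g_1, g_2$ satisfying the hypotheses of \cref{def:Q}, and expand the generalized product componentwise at $M \leq L$ using the formulas of \cref{def:ghost}:
\[
u_M = \Bigl(\prod_{[\gamma] \in M \backslash L / {}^{g_1}\!K_1} a_{M^{\gamma g_1} \cap K_1}\Bigr) \cdot \Bigl(\prod_{[\gamma'] \in M \backslash L / {}^{g_2}\!K_2} b_{M^{\gamma' g_2} \cap K_2}\Bigr).
\]
Since $a$ and $b$ are supported on $H$-conjugates of $I$, either some factor vanishes (so $u_M = 0 \in n_{M,L}\ZZ$), or every factor is nonzero, in which case $u_M = x^r y^s$ for $r := |M \backslash L / {}^{g_1}\!K_1|$ and $s := |M \backslash L / {}^{g_2}\!K_2|$.

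To control $x^r y^s$ in the nonzero case, I will compare with the auxiliary element $z := n \cdot e_I \in \P(G/H)$. Since $\P$ is a Tambara ideal, applying $\res^H_{K_1}$, $\conj_{g_1, K_1}$, and $\nm^L_{{}^{g_1}\!K_1}$ to $z$ yields an element of $\P(G/L)$ whose $M$-component equals $n^r$ under the same ``all-factors-nonzero'' condition as for $a$; analogously with $K_2, g_2$ we get $n^s$. Thus $n_{M,L} \mid n^r$ and $n_{M,L} \mid n^s$. Without loss of generality $r \leq s$, and since $n \mid xy$, we obtain $n_{M,L} \mid n^r \mid (xy)^r \mid x^r y^s = u_M$, so $u \in \P(G/L)$ as required.

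The main obstacle is the bookkeeping around the generalized product and matching its vanishing conditions with those of $z$; the key insight that tames this is that $a$, $b$, and $z$ share the same support pattern $e_I$, so the relevant vanishing conditions and exponents align precisely, reducing $Q(\P, a, b)$ to the elementary divisibility chain above.
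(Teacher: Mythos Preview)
Your proof is correct and follows essentially the same strategy as the paper: define $a,b$ as scalar multiples of the $H$-conjugacy-class indicator $e_I$, then verify $Q(\P,a,b)$ componentwise by using closure of $\P$ under the Tambara operations to produce the needed divisibility. The only cosmetic difference is that the paper applies those operations to $a\cdot b = xy\cdot e_I \in \P(G/H)$ rather than to your auxiliary $z = n\cdot e_I$, arriving at the same divisibility chain.
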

\begin{proof}
	Let $\alpha, \beta\in \ZZ$ so that $\alpha\cdot \beta \in (\n{I_0}{H})$ for some subgroup $I_0 \leq H$ and consider the elements $a, b\in \burnghost(G/H)$ given by 
    \[
    	a_I = 	
		\begin{cases}
        	\alpha & I \conjugate{H} I_0, \\
		    0 & \text{else},
    	\end{cases}
    	\quad \text{ and } \quad 
    	b_I = 
		\begin{cases}
			\beta & I \conjugate{H} I_0, \\
        	0 & \text{else},
		\end{cases}
    \]
    where $\conjugate{H}$ denotes conjugacy in $H$. We want to show that $\alpha$ or $\beta$ is in $(\n{I_0}{H})$, or equivalently, that $a$ or $b$ is in $\P(G/H)$. We do so by showing that $Q(\P, a, b)$ is true, which then implies the desired result as $\P$ is prime.
    
    Let $K_1, K_2\leq H$ and $K_1^{g_1},K_2^{g_2}\leq L$ for $g_1,g_2\in G$. We want to show that the generalized product\[
    \left(\left(\prod_{\gamma \in J\backslash L/K_1^{g_1}} a_{J^{\gamma g_1^{-1}}\cap K_1}\right)\cdot \left(\prod_{\gamma' \in J\setminus L/K_2^{g_2}} b_{J^{\gamma' g_2^{-1}}\cap K_2}\right)\right)_{J\leq L}
    \] is in $\P(G/L)$. The $J^{th}$-component of this product is $0$ (and hence in the ideal) except in the case that $I_0 \preccurlyeq_H K_1,K_2$ and $J^{\gamma g_1^{-1}}\cap K_1 \conjugate{H} J^{\gamma' g_2^{-1}}\cap K_2 \conjugate{H} I_0$ for all $\gamma, \gamma'$. In this case, it suffices to show that\[
    \alpha^{\abs{J\setminus L/K_1^{g_1}}}\cdot \beta^{\abs{J\setminus L/K_2^{g_2}}} \in \P(G/L)_J = (\n{J}{L}).
    \]
    
    Without loss of generality suppose that $\abs{J\backslash L/K_1^{g_1}}\leq \abs{J\backslash L/K_2^{g_2}}$. By assumption, the product $a\cdot b$ is in $\P(G/H)$ and so, since $\P$ is closed under the Tambara structure maps, the element
    \[
        \nm_{K_1^{g_1}}^L  \conj_{g_1, K_1}  \res_{K_1}^H(a\cdot b)
    \] 
    is in $\P(G/L)$. The $J^{th}$-component of this element is \[\prod_{\gamma \in J\setminus L / K_1^{g_1}} (a\cdot b)_{J^{\gamma g_1^{-1}}\cap K_1} = (\alpha\cdot \beta)^{\abs{J\setminus L/K_1^{g_1}}}\] and so $\alpha^{\abs{J\setminus L/K_1^{g_1}}}\cdot \beta^{\abs{J\setminus L/K_1^{g_1}}}\in (\n{J}{L})$ and the claim follows.
\end{proof}

\begin{proposition}\label{prop:value indep of lvl}
    For all $I\leq H\leq G$, the value of $\n{I}{H}$ does not depend on $H$, i.e. $\n{I}{H} = \n{I}{G}$.
 \end{proposition}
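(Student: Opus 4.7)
The plan is to establish the equality $\n{I}{H} = \n{I}{G}$ by proving both divisibilities $\n{I}{H} \mid \n{I}{G}$ and $\n{I}{G} \mid \n{I}{H}\cdot m$ (for a positive integer $m$), then pinching the two values together using the constraint from \cref{lem:componentwise prime}. Throughout, let $f_{[I]_H} \in \tilde{A}(H)$ denote the characteristic function of the $H$-conjugacy class of $I$ in $\burnghost(G/H)$, and similarly for $f_{[I]_G}$.

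For the first divisibility, observe that $\n{I}{G} f_{[I]_G}$ lies in $\P(G/G)$ by the definition of $\n{I}{G}$. Since restriction in $\burnghost$ is componentwise projection (\cref{def:ghost}), the element $\res^G_H(\n{I}{G} f_{[I]_G}) \in \P(G/H)$ has $\n{I}{G}$ in its $I$-component, and therefore $\n{I}{H} \mid \n{I}{G}$. For the second divisibility, apply the transfer formula to $\n{I}{H} f_{[I]_H} \in \P(G/H)$: the $I$-component of $\tr^G_H(\n{I}{H}f_{[I]_H}) \in \P(G/G)$ equals
\[
\n{I}{H}\cdot \bigl| \{[g] \in G/H : I^g \leq H \text{ and } I^g \conjugate{H} I\}\bigr| = \n{I}{H}\cdot |N_G(I):N_H(I)|,
\]
a positive multiple of $\n{I}{H}$; hence $\n{I}{G} \mid \n{I}{H}\cdot |N_G(I):N_H(I)|$.

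Case analysis then completes the argument once we invoke \cref{lem:componentwise prime}, which forces each of $\n{I}{H}$ and $\n{I}{G}$ to lie in $\{0,1\} \cup \{\text{primes}\}$. If $\n{I}{H} = 0$, then $\n{I}{H}\mid \n{I}{G}$ gives $\n{I}{G} = 0$. If $\n{I}{H}$ is a prime $q$, then $q \mid \n{I}{G}$ together with the three-element constraint forces $\n{I}{G} \in \{0,q\}$, and the value $0$ is ruled out because it would require $|N_G(I):N_H(I)| = 0$. The remaining case $\n{I}{H} = 1$ gives, via the transfer divisibility, $\n{I}{G} \mid |N_G(I):N_H(I)|$, so $\n{I}{G}$ is either $1$ or a prime dividing this index.

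The hardest step is ruling out this last alternative $\n{I}{H}=1 < \n{I}{G}=q$ prime dividing $|N_G(I):N_H(I)|$. Here I would exploit $\nm^G_H(f_{[I]_H}) \in \P(G/G)$, whose $I$-component equals $1$ whenever every double-coset representative $[g] \in I \backslash G/H$ satisfies $I^g \leq H$ and $I^g \conjugate{H} I$; when this fails one passes to intermediate subgroups $I \leq K \leq H$ (each satisfying $\n{I}{K}=1$ by the restriction direction applied inductively) and combines norms, transfers, and the full Tambara primality of $\P$ to force a contradiction. This final subcase is the main technical obstacle; the other cases are formal consequences of the two divisibility bounds together with \cref{lem:componentwise prime}.
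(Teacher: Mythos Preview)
Your divisibility framework (restriction gives $\n{I}{H}\mid \n{I}{G}$, transfer gives $\n{I}{G}\mid \n{I}{H}\cdot\lvert N_G(I):N_H(I)\rvert$) is correct and is exactly the mechanism the paper uses. The cases $\n{I}{H}=0$ and $\n{I}{H}=q$ prime go through as you describe. The gap is, as you yourself flag, the case $\n{I}{H}=1$, and your sketch there does not work.

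Concretely, your proposed element $\nm^G_H(f_{[I]_H})$ has $I$-component $\prod_{[g]\in I\backslash G/H}(f_{[I]_H})_{I^g\cap H}$, which is $0$ as soon as some $I^g\cap H$ fails to be $H$-conjugate to $I$; this happens generically, so the norm from $H$ gives no information. Passing to intermediate $I\le K\le H$ cannot help either: restriction already pins down $\n{I}{K}=1$ for all such $K$, but you are trying to go \emph{upward} from $H$ to $G$, and none of these $K$ lie above $H$.

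The missing idea is to abandon $H$ entirely and route through the normalizer $N=N_G(I)$. Since $\n{I}{H}=1$ forces $\n{I}{I}=1$ by restriction, the characteristic function $\alpha$ of $\{I\}$ lies in $\P(G/I)$. Now $\nm_I^{N}(\alpha)$ has $I$-component $\prod_{[g]\in I\backslash N/I}\alpha_{I^g\cap I}$, and because $g\in N$ forces $I^g=I$, every factor is $1$; hence $\n{I}{N}=1$. Finally, transfer the characteristic function $\alpha'$ of the $N$-conjugacy class of $I$ from $N$ up to $G$: the $I$-component of $\tr_N^G(\alpha')$ counts those $\gamma N\in G/N$ with $I^\gamma\sim_N I$, and a short normalizer argument shows this count is exactly $1$, giving $\n{I}{G}=1$. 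The two-step norm--then--transfer through $N_G(I)$ is precisely the content of the paper's first lemma, and it is what your proposal is missing.
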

 The proof is broken up into three lemmas.

\begin{lemma}
    For any prime ideal $\P$, write $\P(G/K) = (\n{I}{K})_{I\leq K}$ as above. For any subgroup $H\leq G$, the following are equivalent:
    \begin{enumerate}[(a)]
        \item $\n{H}{G}=1$,
        \item $\n{H}{K}=1$ for all $H\leq K\leq G$,
        \item $\n{H}{H}=1$.
    \end{enumerate}
\end{lemma}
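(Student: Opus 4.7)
The implications (b) $\Rightarrow$ (a) and (b) $\Rightarrow$ (c) are immediate by specializing to $K = G$ and $K = H$ respectively, and (a) $\Rightarrow$ (b) is nearly as easy: if $\vec{c} \in \P(G/G)$ has $c_H = 1$, then $\res^G_K(\vec{c}) \in \P(G/K)$ also has $H$-component equal to $c_H = 1$ for any $H \leq K \leq G$, since restriction in $\burnghost$ is projection onto the relevant coordinates. So the real work is to establish (c) $\Rightarrow$ (a).

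Suppose $\n{H}{H} = 1$. Then the element $\vec{a} \in \tilde{A}(H)$ with $a_H = 1$ and $a_I = 0$ for all proper subgroups $I < H$ is $H$-invariant (conjugation by $H$ preserves $H$ and sends proper subgroups to proper subgroups) and lies in $\P(G/H)$, since each component lies in $\n{I}{H}\ZZ$. The strategy is to \emph{norm up to the normalizer, then transfer up to $G$}. Let $N := N_G(H)$, and set $\vec{c} := \nm_H^N(\vec{a}) \in \P(G/N)$. Using the norm formula from \cref{def:ghost}, for $I \leq N$ we have
\[
    c_I = \prod_{[g] \in I \backslash N / H} a_{I^g \cap H}.
\]
Since $H \trianglelefteq N$, the condition $I^g \cap H = H$ is equivalent to $H \leq I$, independent of $g$. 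Hence each factor equals $1$ if $H \leq I$ and $0$ otherwise, giving $c_I = 1$ when $H \leq I \leq N$ and $c_I = 0$ otherwise.

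Now apply $\tr^G_N$ to obtain $\tr_N^G(\vec{c}) \in \P(G/G)$. Using the transfer formula, the $H$-component is
\[
    \bigl(\tr_N^G(\vec{c})\bigr)_H = \sum_{\substack{[k] \in G/N \\ H^k \leq N}} c_{H^k}.
\]
A summand $c_{H^k}$ is nonzero only when $H \leq H^k$, which forces $H^k = H$ by cardinality, hence $k \in N_G(H) = N$. So only the identity coset contributes, and the sum equals $c_H = 1$. We conclude $\n{H}{G} = 1$, closing the cycle.

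The main obstacle is identifying the right construction for (c) $\Rightarrow$ (a): with only local information at $H$ we must lift to the global level. The key is passing through $N_G(H)$, since normality of $H$ in $N$ causes the norm's double coset formula to collapse to a uniform value; once $\vec{c}$ has this clean description, the transfer back up to $G$ is forced to isolate the identity double coset.
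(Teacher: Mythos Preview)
Your proof is correct and follows essentially the same route as the paper: both prove (c) $\Rightarrow$ (a) by norming from $H$ up to $N = N_G(H)$ and then transferring from $N$ to $G$, using that only the identity coset survives in the $H$-component of the transfer. The only cosmetic difference is that the paper, after observing $\n{H}{N}=1$, transfers the indicator of $\{H\}$ in $\tilde{A}(N)$, whereas you transfer $\nm_H^N(\vec{a})$ directly (which is the indicator of $\{I : H \leq I \leq N\}$); both elements lie in $\P(G/N)$ and yield $H$-component $1$ after transfer.
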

\begin{proof}
    The implication $(b)\implies (c)$ is immediate.  The implication $(a)\implies (b)$ follows at once from the fact that the restriction map in the ghost is projection.  Thus it suffices to prove $(c)\implies (a)$.
    
    Let us abbreviate $N = N_GH$. We first show that $\n{H}{N}=1$.  To see this, let $\alpha \in \P(G/H)$ be the element with components
    \[
        \alpha_{I,H} = \begin{cases}
            1 & I=H,\\
            0 & \text{else}.
        \end{cases}
    \]
    Taking the norm we have
    \[
        \nm_H^{N}(\alpha)_{H,N} = \prod\limits_{[g]\in H\backslash N/H} \alpha_{H,H} = 1
    \]
    and thus $1\in (\n{H}{N})$ so $\n{H}{N}=1$.  Let $\alpha'\in \P(G/N)$ be the element with components
    \[ 
        \alpha'_{I,N} = \begin{cases}
            1 & I \conjugate{N} H,\\
            0 & \text{else}.
        \end{cases}
    \]
    which must be in $\P(G/N)$ since $\n{H}{N}=1$.  Taking the transfer we have 
    \[
    \tr^G_N(\alpha')_{H,G} = \sum_{\substack{[\gamma]\in G/N\\H^\gamma \leq N}} \alpha'_{H^{\gamma}, N}. 
    \]
    Let $X = \{[\gamma]\in G/N\mid H^{\gamma} \conjugate{N} H\}$.  Since $\alpha'_{H^{\gamma},N}=0$ unless $\gamma\in X$ we have 
    \[
        \tr^G_N(\alpha')_{H\leq G} = \sum_{\gamma\in X}\alpha_{H^{\gamma},N} = |X|.
    \]
    We claim that $X$ has precisely one element, namely the class of $\gamma = e$, and thus $\tr^G_N(\alpha')_{H, G}=1$ and $\n{H}{G}=1$.  To see this, suppose that $x\in X$ and note that there must be a $y\in N$ such that $H^{xy}=H$.  But then $xy\in N_GH=N$ and hence $x\in N$, so $xN =eN$ as claimed.
\end{proof}
\begin{lemma}
    For any prime ideal $\P$, write $\P(G/K) = (\n{I}{K})_{I\leq K}$ as above. For any subgroup $H\leq G$, the following are equivalent:
    \begin{enumerate}[(a)]
        \item $\n{H}{G}=0$,
        \item $\n{H}{K}=0$ for all $H\leq K\leq G$,
        \item $\n{H}{H}=0$.
    \end{enumerate}
\end{lemma}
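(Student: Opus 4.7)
The proof plan is to establish the cycle (b) $\Rightarrow$ (c) $\Rightarrow$ (a) $\Rightarrow$ (b). The implication (b) $\Rightarrow$ (c) is immediate on setting $K = H$, and (c) $\Rightarrow$ (a) is almost as quick: for any $\beta \in \P(G/G)$, the restriction formula in \cref{def:ghost} gives $\res^G_H(\beta)_H = \beta_H$, and since $\res^G_H(\beta) \in \P(G/H)$, hypothesis (c) forces $\beta_H \in \n{H}{H}\ZZ = \{0\}$. Hence $\n{H}{G} = 0$.

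The main content is (a) $\Rightarrow$ (b), which mirrors the constructive argument in the preceding lemma but runs in reverse. Fix $H \leq K \leq G$ and let $\beta \in \P(G/K)$; the goal is to show $\beta_H = 0$. The strategy is to ``isolate'' the $H$-component of $\beta$ using a characteristic-function-style multiplier and then transfer up to $G$ to apply hypothesis (a). Define $\alpha \in \burnghost(G/K) = \tilde{A}(K)$ by $\alpha_I = 1$ if $I \conjugate{K} H$ and $\alpha_I = 0$ otherwise; this is $K$-invariant by construction. Since $\P$ is an ideal, $\alpha\beta \in \P(G/K)$, so $\tr^G_K(\alpha\beta) \in \P(G/G)$.

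Unpacking the transfer formula and using the $K$-invariance of $\beta$, the $H$-component simplifies to
\[
    \tr^G_K(\alpha\beta)_H
    = \sum_{\substack{[k] \in G/K \\ H^k \conjugate{K} H}} \beta_{H^k}
    = N \cdot \beta_H,
\]
where $N \geq 1$ since the trivial coset $[e]$ always contributes (using $H \leq K$, so $H^e = H \conjugate{K} H$ and $H^e \leq K$). By hypothesis (a), this element lies in $\n{H}{G}\ZZ = \{0\}$, forcing $\beta_H = 0$ as required. Since $\beta \in \P(G/K)$ was arbitrary, $\n{H}{K} = 0$.

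There is no serious obstacle beyond noticing the asymmetry with the preceding lemma: ``has $H$-component equal to $1$'' is witnessed by a single element and transports freely under restriction (which is projection), making (a) $\Rightarrow$ (b) trivial there, whereas ``every $H$-component is $0$'' is a universal statement requiring one to produce arbitrary elements of $\P(G/K)$ and push them up to $G$ via the transfer. The roles of restriction and transfer in the two lemmas are therefore swapped.
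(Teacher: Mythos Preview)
Your proof is correct. The implications (b) $\Rightarrow$ (c) and (c) $\Rightarrow$ (a) match the paper exactly. For (a) $\Rightarrow$ (b) you take a slightly different route: the paper argues by contrapositive, first using restriction to deduce $\n{H}{H} \neq 0$ from $\n{H}{K} \neq 0$, then building an element of $\P(G/H)$ concentrated at $H$ and transferring from level $H$ up to $G$ to conclude $\n{H}{G} \neq 0$. You instead work directly at level $K$, masking an arbitrary $\beta \in \P(G/K)$ by the indicator of the $K$-conjugacy class of $H$ and transferring from $K$ to $G$ to obtain $N\beta_H = 0$ with $N \geq 1$. Your approach avoids the detour through level $H$ and the contrapositive at the cost of introducing the masking element; the paper's approach effectively reuses the equivalence (a) $\Leftrightarrow$ (c) and only ever transfers from the fixed level $H$. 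Both arguments rest on the same structural point, namely that the $H$-component of the relevant transfer is a positive integer multiple of the entry one is trying to control.
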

\begin{proof}
    The implication $(b)\implies (c)$ is trivial, the implication $(c)\implies (a)$ is immediate from the fact that restriction is given by projection.  Thus it remains to prove $(a)\implies (b)$.  Suppose there is a $K\leq G$ with $\n{H}{K} \neq 0$.  Since restriction is projection it follows that $n = \n{H}{H} \neq 0$.  Consider the element $\alpha\in \P(G/H)$ with components
    \[
        \alpha_{I, H} = 
        \begin{cases}
            n & I=H,\\
            0 & \text{else}.
        \end{cases}
    \]
    It is immediate from the definitions that $\tr^G_H(\alpha)_{H\leq G}\neq0$.  Since $\tr^G_H(\alpha)_{H\leq G}\in (\n{H}{G})$ we have $\n{H}{G} \neq 0$.  The implication $(a)\implies (b)$ follows by contrapositive.\end{proof}

\begin{lemma}
    Let $p$ be a prime number. For any prime ideal $\P(G/K) = (\n{I}{K})_{I\leq K}$ and any subgroup $H\leq G$ the following are equivalent:
    \begin{enumerate}[(a)]
        \item $\n{H}{G}=p$,
        \item $\n{H}{K}=p$ for all $H\leq K\leq G$,
        \item $\n{H}{H}=p$.
    \end{enumerate}
\end{lemma}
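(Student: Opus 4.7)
The plan is to mirror the structure of the two preceding lemmas, but to exploit a shortcut made available by Lemma \ref{lem:componentwise prime}: each component $\n{I}{K}$ must be $0$, $1$, or a prime. Combined with the observation (used in both previous proofs) that restriction in $\burnghost$ is componentwise projection, one gets the divisibility chain $\n{H}{H}\mid\n{H}{K}\mid\n{H}{G}$ for every $H\leq K\leq G$, and this together with the two preceding lemmas will suffice --- no new norm or transfer computation is required.

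The implication $(b)\Rightarrow(c)$ is immediate by taking $K=H$. For $(c)\Rightarrow(a)$, I would assume $\n{H}{H}=p$ and note that projection gives $p\mid\n{H}{G}$; Lemma \ref{lem:componentwise prime} then narrows the possibilities to $\n{H}{G}\in\{0,p\}$. The preceding lemma (the $0$-case) rules out $\n{H}{G}=0$, since otherwise the equivalence there would force $\n{H}{H}=0\neq p$. Hence $\n{H}{G}=p$.

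For $(a)\Rightarrow(b)$, I would assume $\n{H}{G}=p$, so divisibility gives $\n{H}{K}\mid p$, and Lemma \ref{lem:componentwise prime} narrows this to $\n{H}{K}\in\{1,p\}$. The first lemma of this chain (the $1$-case) then rules out $\n{H}{K}=1$: such an equality would force $\n{H}{H}\mid 1$, hence $\n{H}{H}=1$, which by that lemma would give $\n{H}{G}=1\neq p$, a contradiction. Therefore $\n{H}{K}=p$.

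The only real obstacle is bookkeeping --- ensuring the divisibilities flow in the correct direction and that the two previous lemmas are invoked in their correct directions. Alternatively, if one preferred a proof parallel in style to the first lemma of the chain, the $\alpha, \alpha'$ construction used there would transfer verbatim: take $\alpha_{I,H}=p$ for $I=H$ and $0$ otherwise, show $\nm_H^{N_GH}(\alpha)_{H,N_GH} = p^{|H\backslash N_GH/H|}$ to conclude $\n{H}{N_GH}=p$ (using Lemma \ref{lem:componentwise prime} and the $0$-case to rule out $\n{H}{N_GH}\in\{1,0\}$), then define $\alpha'_{I,N_GH} = p$ for $I\conjugate{N_GH}H$ and transfer up, invoking the same $|X|=1$ argument to obtain $\tr^G_{N_GH}(\alpha')_{H,G} = p$. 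Either route succeeds.
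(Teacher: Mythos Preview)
Your proposal is correct and follows essentially the same approach as the paper: both arguments rely on the divisibility chain $\n{H}{H}\mid\n{H}{K}\mid\n{H}{G}$ coming from restriction-as-projection, together with \cref{lem:componentwise prime} and the two preceding lemmas to eliminate the $0$ and $1$ cases. The only cosmetic difference is that the paper first reduces (via the two preceding lemmas) to the case where $\n{H}{G}$ and $\n{H}{H}$ are both prime numbers before running the divisibility argument, whereas you handle those cases in-line.
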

\begin{proof}
    Since $\P$ is prime we know $\n{H}{G}$ and $\n{H}{H}$ are prime numbers, $0$, or $1$.  By the previous two lemmas, we know that if either or these numbers is $0$ or $1$ then all three of the possibilites in the statement of the lemma are impossible. So without loss of generality we assume that $\n{H}{G}$ and $\n{H}{H}$ are both prime numbers, though not necessarily the same ones.
    
    The implication $(b)\implies (c)$ is trivial, the implication $(c)\implies (a)$ follows from the fact that restriction is a projection which implies that $\n{H}{G}\in (\n{H}{H}) = (p)$. Since $\n{H}{G}$ is prime we see that $\n{H}{G}=p$.

    To prove that $(a)\implies (b)$, suppose that $\n{H}{G}=p$ and there is some $K\leq G$ with $\n{H}{K}\neq p$.  Since restriction is projection we have $\n{H}{G}=p\in(\n{H}{K})$ which implies that $(\n{H}{K})$ is either $1$ or $p$.  If $\n{H}{K}=1$ then restriction implies $\n{H}{H}=1$, but we have already seen that this would imply $\n{H}{G}=1$ which is a contradiction.  Thus $(\n{H}{K})=p$ and $(a)\implies (b)$.
\end{proof}

We may therefore write $\P(G/H) = (p_I)_{I\leq H}$ where each $p_I$ is $0$, $1$, or prime. Recall that if $\P(G/e) = (p) \subseteq \ZZ$, then we say that $\P$ is a \textit{prime ideal over $p$}.

\begin{lemma}
    \label{lemma: families from primes}
    If $\P$ is a prime ideal over $p$, then $\sr{F}(\P) := \{I\leq G\mid p_I\neq 1\}$ is a nonempty family of subgroups.
\end{lemma}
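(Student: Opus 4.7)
The plan is to verify the three defining properties of a family of subgroups for $\sr{F}(\P)$: nonemptiness, closure under $G$-conjugation, and closure under passage to subgroups.

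For nonemptiness, since $\P$ is over $p$ we have $\P(G/e) = (p)$ and thus $p_e = p$; as $p$ is either a prime or $0$, we have $p_e \neq 1$, so the trivial subgroup lies in $\sr{F}(\P)$. For conjugation closure, we invoke \cref{notation: ideals of A tilde}: any ideal of $\tilde{A}(G)$ has the form $\prod_{K \leq G} n_K \ZZ$ with $n_{K_1} = n_{K_2}$ whenever $K_1 \conjugate{G} K_2$. Applying this to $\P(G/G) = (p_I)_{I \leq G}$, we see that $p_{I^g} = p_I$ for all $g \in G$, so $\sr{F}(\P)$ is closed under $G$-conjugation.

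The main step is closure under taking subgroups. We argue the contrapositive: suppose $J \leq I \leq G$ with $p_J = 1$; we show $p_I = 1$. By the preceding lemma, $p_J = 1$ is equivalent to $\n{J}{J} = 1$. Consider the element $\alpha \in \tilde{A}(J)$ defined by $\alpha_J = 1$ and $\alpha_L = 0$ for all proper subgroups $L < J$; this tuple is $J$-invariant and lies in $\P(G/J)$ because its $J$-component $1$ lies in $\n{J}{J}\ZZ = \ZZ$ and its other components are zero. Applying the norm $\nm_J^I$ and using that $I\backslash I/J$ consists of the single double coset $IeJ$, we compute
\[
    \nm_J^I(\alpha)_I = \prod_{[g]\in I\backslash I/J} \alpha_{I^g \cap J} = \alpha_{I \cap J} = \alpha_J = 1.
\]
Since $\nm_J^I(\alpha) \in \P(G/I)$, this forces $\n{I}{I} = 1$; a second application of the preceding lemma then yields $p_I = 1$, as desired.

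The only real obstacle is the subgroup-closure step, where the Tambara structure must be exploited; everything else is immediate from the definitions and the structure of ideals in the table of marks $\tilde{A}(G)$. The crucial observation is that for $J \leq I$, the double coset set $I\backslash I/J$ is a singleton, so the norm $\nm_J^I$ sends an element with value $1$ at the $J$-component to one with value $1$ at the $I$-component.
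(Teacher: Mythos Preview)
Your proof is correct and follows essentially the same approach as the paper: both arguments use the norm $\nm_J^I$ together with the observation that $I\backslash I/J$ is a singleton to force a containment between the components $p_J$ and $p_I$. The paper norms the ``generic'' element $(p_L)_{L\leq J}$ to get $p_J\in(p_I)$ directly, while you norm the indicator-type element $\alpha$ after assuming $p_J=1$; these are minor variations on the same idea. Your explicit treatment of nonemptiness and conjugation closure is a welcome addition, since the paper's proof leaves those implicit.
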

\begin{proof}
    For $H\leq K$, the $K$-component of $\nm_H^K((n_I)_{I\leq H})$ is $\prod_{K\setminus K/H} n_{K\cap H} = n_H$. This implies $n_H\in (n_K)$, and hence if $n_K\neq 1$, we must have $n_H\neq 1$.
\end{proof}

\begin{remark}
    The exact same argument shows that $\sr{F}_0 := \{I\leq H\mid p_I=0\}$ is a family of subgroups. This will only be relevant for $\P$ a prime ideal over $0$.
\end{remark}

Recall from \Cref{defn:PFp} that for a family of subgroups $\sr{F}$, we consider the ideal $\P_{\sr{F},p}(G/H) = (p_I)_{I \leq H}$ where $p_I$ is $p$ if $I \in \sr{F}$ and $1$ else.

\begin{corollary}\label{prime over p}
    If $\P$ is a prime ideal over $p\neq 0$, and $\sr{F}(\P)$ is as in \Cref{lemma: families from primes},  then $\P=\P_{\sr{F}(\P),p}$.
\end{corollary}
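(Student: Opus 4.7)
The plan is to show that $p_J \in \{1, p\}$ for every subgroup $J \leq G$, for then $\sr{F}(\P) = \{J : p_J = p\}$ and the definitions of $\P$ and $\P_{\sr{F}(\P), p}$ agree level-by-level at every subgroup $H \leq G$.

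The key step is to apply the norm $\nm^G_e$ to the generator $p \in \P(G/e) = (p)$; since $\P$ is a Tambara ideal, the result lies in $\P(G/G)$. Because $\tilde{A}(e) = \ZZ$ has only one component, the norm formula in \cref{def:ghost} collapses to $\nm^G_e(p)_J = p^{|G|/|J|}$ for each $J \leq G$, using that $J \setminus G / e = J \setminus G$ has $|G|/|J|$ elements and that $J^g \cap e = e$ for every $g$. Hence $p^{|G|/|J|} \in (p_J)$ for every $J \leq G$.

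By the three lemmas just proved in \cref{prop:value indep of lvl}, each $p_J$ is $0$, $1$, or a prime. The divisibility $p^{|G|/|J|} \in (p_J)$ rules out $p_J = 0$ (a positive power of $p$ cannot vanish), and, by unique factorization in $\ZZ$, rules out $p_J$ being any prime other than $p$. We conclude $p_J \in \{1, p\}$, which is precisely the equality $\P = \P_{\sr{F}(\P), p}$.

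There is no significant obstacle. All the heavy lifting has been done by the Tambara-functor structure on $\burnghost$ (\cref{ghost is TF}) and the structural lemmas pinning down the component values of $\P$ (\cref{lem:componentwise prime}--\cref{prop:value indep of lvl}); the one concrete computation needed is the norm formula above, which is immediate from the explicit double-coset description, and the rest is elementary divisibility.
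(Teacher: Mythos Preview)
Your proof is correct and follows essentially the same approach as the paper: both apply $\nm_e^G$ to the generator $p\in\P(G/e)$, compute the $J$-component as $p^{\abs{G:J}}$ via the explicit norm formula, and then use divisibility (together with the fact that each $p_J$ is $0$, $1$, or prime) to force $p_J\in\{1,p\}$. The only difference is presentational---you spell out the elimination of $p_J=0$ and of primes $\neq p$ a bit more explicitly than the paper does.
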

\begin{proof}
    By assumption, $p_e=p\neq 0$. Taking the norm 
    \[
    	\nm_e^G(p_e) = \bigg(\prod_{\gamma\in I\setminus G/e} p_{I^{\gamma}\cap e}\bigg)_{I\leq G} = \big(p^{\abs{G:I}}\big)_{I\leq G}
    \] shows $p^{\abs{G:I}}\in (p_I)$ for all $I\leq G$. Thus $p_I$ is either $1$ or $p$, so $\P$ is as claimed.
\end{proof}

\begin{proposition}\label{prime over zero}
    If $\P$ is a prime ideal over $0$, then $\sr{F}(\P) = \sr{F}_0$ and $\P = \P_{\sr{F}(\P), 0}$.
\end{proposition}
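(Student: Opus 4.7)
The plan is to show that $p_I \in \{0, 1\}$ for every subgroup $I \leq G$; from this, comparing the defining formulas immediately yields both $\sr{F}(\P) = \sr{F}_0$ and $\P = \P_{\sr{F}(\P), 0}$. I argue by contradiction: suppose $p_K = q$ for some subgroup $K \leq G$ and some prime $q$, and choose such a $K$ that is maximal in the subgroup lattice of $G$ (which exists because $G$ has only finitely many subgroups). The strategy is to produce elements $a, b \in \burnghost(G/K)$ with $a, b \notin \P(G/K)$ for which the relation $Q(\P, a, b)$ nevertheless holds, contradicting primeness of $\P$.

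Take $a \in \burnghost(G/K)$ with $a_K = 1$ and $a_I = 0$ for $I < K$, and let $b \in \burnghost(G/K)$ be the constant element $b_I = q$ for every $I \leq K$; both are visibly $K$-invariant. Neither lies in $\P(G/K)$: indeed $a_K = 1 \notin (p_K) = (q)$ and $b_e = q \notin (p_e) = (0)$. Using the explicit formulas in \cref{def:ghost}, one computes that the $J$-component of the generalized product appearing in $Q(\P, a, b)$ at level $L$ equals
\[
    q^{|J \setminus L / \,{}^{g_2}\!K_2|} \cdot \mathbf{1}_{\{K_1 = K,\ ({}^{g_1}\!K)^L \leq J\}},
\]
where $({}^{g_1}\!K)^L$ denotes the normal closure of $\,{}^{g_1}\!K$ in $L$. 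This follows because $\res_{K_1}^K(a)$ vanishes whenever $K_1 < K$ (since $a$ is supported only at the $K$-slot), while $b$ remains constant $q$ after restriction, conjugation, and norm.

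The main obstacle is confirming that this quantity always lies in the $J$-component $(p_J)$ of $\P(G/L)$; the nontrivial case is when $J \supseteq \,{}^{g_1}\!K$, which gives $J^{g_1} \supseteq K$. Here I use the $G$-equivariance $p_J = p_{J^{g_1}}$ (coming from the description of ideals of $\tilde{A}$ in \cref{notation: ideals of A tilde} together with \cref{prop:value indep of lvl}): if $p_J = 0$, then $J^{g_1} \in \sr{F}_0$, and downward closure of $\sr{F}_0$ forces $K \in \sr{F}_0$, contradicting $p_K = q \neq 0$; and if $p_J$ is a prime, then $p_{J^{g_1}}$ is a prime with $J^{g_1} \supseteq K$, so the maximality of $K$ forces $J^{g_1} = K$ and hence $p_J = p_K = q$. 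In either case $p_J \in \{1, q\}$, so $q^{|J \setminus L / \,{}^{g_2}\!K_2|} \in (p_J)$ and $Q(\P, a, b)$ holds. Primeness of $\P$ then yields $a \in \P(G/K)$ or $b \in \P(G/K)$, contradicting the construction and completing the proof that $p_I \in \{0, 1\}$ for all $I$.
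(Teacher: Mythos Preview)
Your argument is correct, but it runs along a different axis than the paper's. The paper assumes $\sr{F}_0 \subsetneq \sr{F}(\P)$, picks a \emph{maximal} element $\hat I$ of $\sr{F}_0$, and builds a single element $a \in \burnghost(G/\hat I)$ supported at $\hat I$ with value $n = \prod_{H \in \sr{F}(\P)\setminus\sr{F}_0} p_H$; the contradiction comes from verifying $Q(\P,a,a)$. By contrast, you pick a \emph{maximal} element $K$ of $\sr{F}(\P)\setminus\sr{F}_0$ (i.e.\ a maximal subgroup with $p_K$ a prime $q$) and use two distinct witnesses: the indicator $a$ at $K$ and the constant element $b = q$. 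Your maximality hypothesis lets you pin down $p_J \in \{1,q\}$ in the only nontrivial case, so the single power of $q$ supplied by the norm of $b$ already lands in $(p_J)$; the paper instead absorbs all the potentially different primes $p_H$ at once into the coefficient $n$. Your route avoids introducing the product $n$ and makes the divisibility check at the end slightly more transparent, at the cost of tracking two elements through the generalized product rather than one; the paper's route is more symmetric (it checks $Q(\P,a,a)$) and does not need to argue separately about the factor coming from $b$.
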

\begin{proof}
    Suppose for contradiction that the containment $\sr{F}_0\subseteq \sr{F}(\P)$ is proper. Define \[
    n = \prod_{H\in \sr{F}\setminus \sr{F}_0} p_H
    \] and choose any $\hat I\in \sr{F}_0$ maximal, i.e. with the property that if $\hat I\preccurlyeq_G H$ then $H\not\in \sr{F}_0$. Consider the element $a\in \burnghost(G/\hat I)$ given by
    \[
    	a_I = 
		\begin{cases}
        	n &  I = \hat I,\\
	        0 & \text{else},
		\end{cases}
    \] 
    and observe $a\not\in \P$. We show that $Q(\P, a,a)$ holds, contradicting the fact that $\P$ is prime, and consequently we may conclude that $\sr{F}_0 = \sr{F}(\P)$. It follows immediately that $\P = \P_{\sr{F}(\P), 0}$.

    Let $K_1, K_2\leq H$, $g_1,g_2\in G$, and $K_1^{g_1}, K_2^{g_2}\leq L$. We want to show that the generalized product\[
    \left(\left(\prod_{\gamma \in J\setminus L/K_1^{g_1}} a_{J^{\gamma g_1^{-1}}\cap K_1}\right)\cdot \left(\prod_{\gamma' \in J\setminus L/K_2^{g_2}} a_{J^{\gamma' g_2^{-1}}\cap K_2}\right)\right)_{J\leq L}
    \] is in $\P(G/L)$. The $J^{th}$-component of this product is $0$ (and hence in the ideal) except in the case that $\hat I \preccurlyeq_H K_1,K_2$ and $J^{\gamma g_1^{-1}}\cap K_1 \conjugate{H} J^{\gamma' g_2^{-1}}\cap K_2 \conjugate{H} \hat I$ for all $\gamma, \gamma'$. In this case, $\hat I\preccurlyeq_G J$ and the generalized product is $n^{\abs{J\setminus L/K_1^{g_1}}+\abs{J\setminus L/ K_2^{g_2}}}$. But then $J\not\in \sr{F}_0$ by our choice of $\hat I$, and so $n\in (p_J)$ by the definition of $n$.
\end{proof}

Recall the prime ideals $\fr{p}_{H,p}$ of $\burn_G$ from \cref{def:the_ideals}. 

\begin{corollary}
    For any finite group $G$,
    \[
	    \Spec(\burn_G) = 
    	\{\fr{p}_{H,p}\mid H\leq G,\ p \in \ZZ \text{ prime or } 0\}.    
    \]
\end{corollary}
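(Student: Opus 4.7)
The plan is to assemble the preceding results. First I would invoke surjectivity of $\ghostmap^*$: by \Cref{prop:ghost is map} the ghost map $\ghostmap\colon \uA_G \to \ghost(\uA_G)$ is a morphism of Tambara functors, and since each component $\varphi_H\colon A(H) \hookrightarrow \tilde{A}(H)$ exhibits $\tilde{A}(H)$ as a module-finite (hence integral) extension of $A(H)$, the map $\ghostmap$ is a levelwise integral extension. By \cite[Cor.~5.10]{CMQSV:24}, the induced map $\ghostmap^*\colon \Spec(\ghost(\uA_G)) \to \Spec(\uA_G)$ is surjective, so every prime ideal of $\uA_G$ is of the form $\ghostmap^*(\P)$ for some prime $\P$ of $\ghost(\uA_G)$.

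Next, I would apply the classification of ghost primes built up over the previous results. By \Cref{prime over p} (if $\P$ has positive residue characteristic $p$) and \Cref{prime over zero} (if the residue characteristic is $0$), every prime $\P$ of $\ghost(\uA_G)$ has the form $\P_{\sr{F}(\P),p}$, where $\sr{F}(\P)$ is the family of \Cref{lemma: families from primes}. Then \Cref{thm:prime iff subgp fam} forces $\sr{F}(\P) = \{I \leq G \mid I \preccurlyeq_G H\}$ for some $H \leq G$, so $\P = \P_{H,p}$. Combining this with surjectivity of $\ghostmap^*$, every prime of $\uA_G$ has the form $\ghostmap^*(\P_{H,p})$ for some $H \leq G$ and $p$ prime or zero.

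Finally I would identify $\ghostmap^*(\P_{H,p})$ with $\fr{p}_{H,p}$ by unpacking definitions. An element $x \in A(K) = \uA_G(G/K)$ lies in $\ghostmap_K^{-1}(\P_{H,p}(G/K))$ iff $\varphi_K^I(x) \in \delta_{H,p}(I)\ZZ$ for every $I \leq K$. Since $\delta_{H,p}(I) = p$ precisely when $I \preccurlyeq_G H$ and equals $1$ otherwise, this condition becomes $x \in \ker \varphi_{K,p}^I$ for every $I \leq K$ with $I \preccurlyeq_G H$, which is exactly the defining condition of $\fr{p}_{H,p}(G/K)$ in \Cref{def:the_ideals}. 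Since the $\fr{p}_{H,p}$ are known to be prime by \cite{CalleGinnett:23}, the corollary follows.

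The hardest part has already been handled in the preceding sections: establishing the Tambara structure on $\ghost(\uA_G)$, the integrality of $\ghostmap$, and the classification of $\Spec(\ghost(\uA_G))$. The final corollary thus reduces to a bookkeeping exercise matching the descriptions of $\P_{H,p}$ and $\fr{p}_{H,p}$ through the ghost map.
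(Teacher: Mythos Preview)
Your proposal is correct and follows essentially the same approach as the paper: classify the primes of $\ghost(\uA_G)$ as exactly the $\P_{H,p}$, invoke the going-up/surjectivity result from \cite{CMQSV:24} for the levelwise integral ghost map, and identify $\ghostmap^*(\P_{H,p})=\fr{p}_{H,p}$. Your write-up is simply more explicit than the paper's two-sentence version, spelling out the unwinding of $\ghostmap^{-1}(\P_{H,p})$ in terms of $\delta_{H,p}$ and the kernels $\ker\varphi_{K,p}^I$; the paper also cites Corollary~5.7 rather than 5.10 of \cite{CMQSV:24} at this point, but this is a harmless internal discrepancy (the proof outline earlier in the paper uses 5.10 as you do).
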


\begin{proof}
    We have shown that the prime ideals of $\burnghost$ are precisely the $\P_{H,p}$. Since the ghost map is a levelwise integral extension, by \cite[Corollary 5.7]{CMQSV:24}, the ghost map induces a surjection on $\Spec$, which is precisely the assignment $\P_{H,p}\mapsto \fr{p}_{H,p}$.
\end{proof}

We now turn to the question of the poset structure on $\Spec(\burn_G)$.  That is, we describe when there are containments $\mf{p}_{H,p}\subseteq \mf{p}_{K,q}$ for $H,K\leq G$ and $p$ and $q$ prime or $0$. In \cite[Theorem 3.9]{CalleGinnett:23}, the authors establish when some containments occur for abelian $G$. We extend this result to apply to any finite group in \cref{thm:containment all} below.  Before doing so, we need a few preliminary lemmas.

Let $H$ be a $p$-perfect subgroup of $G$, i.e.\ $H = O^p(H)$, where $O^p(H)$ is the $p$-residual subgroup as in \cref{defn:OpH}. Let $a\in \widetilde{A}(G)$ be the element of the ghost of $G$ defined by
\[
    a_I = \begin{cases}
        1 & O^p(I) \sim_G H,\\
        0 & \text{else.}
    \end{cases}
\]
Note that by assumption $O^p(H) = H$ and so $a_H=1$. This is indeed an element in the ghost, since the value of $a_I$ depends only on $I$ up to conjugacy. 

\begin{lemma}\label{lemma: p local Burnside element}
    There exists an integer $n>0$, coprime to $p$, such that $n\cdot a$ is in the image of the ghost map.
\end{lemma}
\begin{proof}
    Consider the commutative diagram
    \[
    \begin{tikzcd}
        A(G) \ar[d,swap,"\psi",hook] \ar[r,"\chi",hook] & \widetilde{A}(G) \ar[d,"\widetilde\psi",hook]\\
        A_{(p)}(G) \ar[r,swap,"\chi_{(p)}",hook] & \widetilde{A}_{(p)}(G)
        \end{tikzcd}
    \]
    where the subscript denotes localization at $(p)$, i.e.\ all primes except for $p$ are made invertible.  The vertical maps are inclusions, and the horizontal maps are both injective.
    By \cite[Remark 1.4.3]{tomDieck}, there is some $x\in A_{(p)}(G)$ such that $\chi_{(p)}(x) = \widetilde{\psi}(a)$.  On the other hand, there must be some $n$ coprime to $p$ so that $n\cdot x = \psi(y)$ for some $y\in A(G)$, hence
    \[
        \widetilde{\psi}(n\cdot a) = n\chi_{(p)}(x) = \chi_{(p)}(nx) = \chi_{(p)}(\psi(y)) = \widetilde{\psi}(\chi(y))
    \]
    and since $\widetilde{\psi}$ is injective we must have $\chi(y) = n \cdot a$.
    \end{proof}

\begin{lemma}\label{lem:pHp=pOpHp}
    For $H\leq G$ and $p\in \ZZ$ prime, there is an equality $\fr{p}_{H,p} = \fr{p}_{O^p(H), p}$.
\end{lemma}\begin{proof}
     Since the ghost map $\chi \colon \burn_G \hookrightarrow \burnghost$ is injective, it suffices to show that $\K_{H,p} = \K_{O^p(H), p}$, where $\K_{H,p}$ is defined to be the intersection of $\P_{H,p}$ with the image of the ghost map, 
     \[
     	\K_{H,p}(G/L) := \P_{H,p}(G/L) \cap \chi(\burn_G(G/L)). 
     \]  
     Recall also that $\P_{H,p}(G/L) = \big(\delta_{H,p}(I)\big)_{I \leq L}$, where $\delta_{H,p}(I)$ is $p$ if $I\preccurlyeq_G H$ and $1$ otherwise.
     
     First observe that the $\subseteq$ containment is immediate. For $\supseteq$, suppose we have $X\in A(G)$ so that $\varphi^I(X) = \abs{X^I}\in (p)$ for all $I\preccurlyeq_G O^p(H)$. We want to show that $\varphi^J(X) = \abs{X^J}\in (p)$ for all $J\preccurlyeq_G H$, but $\abs{X^J} \equiv_p \abs{X^{O^p(J)}}$ (c.f. \cite[Remark 2.18] {CalleGinnett:23}). Hence it suffices observe that if $J\preccurlyeq_G H$, then $O^p(J)\preccurlyeq_G O^p(H)$, which is \Cref{rmk:OpH from sylows}.
\end{proof}

\begin{theorem}\label{thm:containment all}
    Let $H,K\leq G$ and $p,q$ be prime. Then
    \begin{enumerate}
    \item[(i)] $\fr{p}_{K, 0}\subseteq \fr{p}_{H, 0}$ if and only if $H \preccurlyeq_G K$,
    \item[(ii)] $\fr{p}_{H, 0}\subset \fr{p}_{H, p}$ and $\fr{p}_{H, p}\not\subseteq \fr{p}_{K, 0}$,
    \item[(iii)] $\fr{p}_{K, p}\subseteq \fr{p}_{H, q}$ if and only if $p = q$ and $O^p(H)\preccurlyeq_G O^p(K)$,
    \item[(iv)] $\mf{p}_{K,0}\subseteq \mf{p}_{H,p}$ if and only if $O^p(H)\preccurlyeq_G K$.
\end{enumerate} where $O^p(H)$ is the $p$-residual subgroup as in \Cref{defn:OpH}.
\end{theorem}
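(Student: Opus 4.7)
The plan is to establish each claim levelwise and reduce to Dress's containment results for the Zariski spectrum of the Burnside ring (\cref{thm:dress containment}). Recall that $\fr{p}_{H,p}(G/L) = \bigcap_{I\leq L,\,I\preccurlyeq_G H}\ker(\varphi_{L,p}^I)$ is a \emph{finite} intersection of prime ideals in the commutative ring $A(L)$. Since an intersection of finitely many ideals is contained in a prime ideal if and only if one of the ideals is, the Tambara-ideal containment $\fr{p}_{H,p}\subseteq \fr{p}_{K,q}$ is equivalent to the following pointwise condition: for every subgroup $L\leq G$ and every $J\leq L$ with $J\preccurlyeq_G K$, there exists $I\leq L$ with $I\preccurlyeq_G H$ such that $\ker(\varphi_{L,p}^I)\subseteq \ker(\varphi_{L,q}^J)$.

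For part (i), the $(\Leftarrow)$ direction is immediate because $K\preccurlyeq_G H$ makes every index $J$ of $\fr{p}_{K,0}(G/L)$ also an index of $\fr{p}_{H,0}(G/L)$; take $I=J$. For $(\Rightarrow)$, apply the equivalence at $L=G$, $J=K$ to produce $I\preccurlyeq_G H$ with $\ker(\varphi_{G,0}^I)\subseteq \ker(\varphi_{G,0}^K)$, then invoke Dress (i) to get $I\sim_G K$, whence $K\preccurlyeq_G H$. Part (ii) handles the containment $\fr{p}_{H,0}\subseteq \fr{p}_{H,p}$ termwise via Dress (iv), and handles strictness as well as $\fr{p}_{H,p}\not\subseteq \fr{p}_{K,0}$ by comparing at level $G/e$, where $\fr{p}_{H,0}(G/e)=(0)$ and $\fr{p}_{H,p}(G/e)=(p)$.

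The substantive case is (iii). Evaluation at $G/e$ immediately forces $p=q$ whenever $\fr{p}_{H,p}\subseteq \fr{p}_{K,q}$, since $(p)\subseteq (q)$ requires $p=q$ for distinct primes. Assuming $p=q$, the $(\Leftarrow)$ direction takes, for each $J\leq L$ with $J\preccurlyeq_G K$, the subgroup $I := O^p(J)$: then $I\leq J\leq L$, $O^p(I)=O^p(J)$ by idempotence of $O^p$, and $I\preccurlyeq_G O^p(K)\preccurlyeq_G O^p(H)\leq H$ by hypothesis, so Dress (iii) gives $\ker(\varphi_{L,p}^I)=\ker(\varphi_{L,p}^J)$. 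The $(\Rightarrow)$ direction applies the equivalence at $L=G$ and $J=O^p(K)$: we obtain $I\preccurlyeq_G H$ with $\ker(\varphi_{G,p}^I)\subseteq \ker(\varphi_{G,p}^{O^p(K)})$, from which Dress (ii) and (iii) yield $O^p(I)\sim_G O^p(K)$; combined with $O^p(I)\preccurlyeq_G O^p(H)$ from \cref{rmk:OpH from sylows}, we conclude $O^p(K)\preccurlyeq_G O^p(H)$.

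The main delicacy is the bookkeeping of conjugacy classes at different levels: Dress's results at level $L$ produce $L$-conjugacies, which must be interpreted as $G$-conjugacies among subgroups of $G$. Since $L\leq G$, every $L$-conjugacy is automatically a $G$-conjugacy, so this passes through without obstruction; there is no deeper obstacle, and the argument is essentially the observation that the prime-avoidance-style reduction lets Dress's ring-level containments control the Tambara-level containments.
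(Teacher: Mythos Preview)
Your proof is correct and takes a genuinely different route from the paper's. The paper argues through the ghost: the ``if'' directions of (i)--(iii) are read off from the containments among the $\P_{H,p}$ in $\burnghost$ (\cref{prop:ghost containments}) and pulled back along $\ghostmap$, while the ``only if'' directions are handled by producing explicit witnesses in $\burnghost$ and using that the cokernel of $\ghostmap$ is finite (of order prime to any prime not dividing $\lvert G\rvert$) to drag them back into $\burn_G$. For (iii) the paper's key step is the identity $\K_{H,p}=\K_{O^p(H),p}$, established via the congruence $\lvert X^J\rvert \equiv \lvert X^{O^p(J)}\rvert \pmod p$, which reduces (iii) to the shape of (i).

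Your approach bypasses the ghost entirely. The observation that each $\fr{p}_{H,p}(G/L)$ is a \emph{finite} intersection of prime ideals of $A(L)$, combined with the elementary fact that $\bigcap_k I_k\subseteq P$ with $P$ prime forces some $I_k\subseteq P$, reduces the Tambara containment to containments between individual Dress primes $\ker(\varphi_{L,p}^I)$ in $A(L)$, where \cref{thm:dress containment} applies. Testing at $L=G$ then gives the ``only if'' directions directly, and choosing $I=O^p(J)$ (with $O^p(I)=O^p(J)$ literally, so the needed $L$-conjugacy is trivial) handles the ``if'' direction of (iii). This is shorter and self-contained; it would prove \cref{thm:containment all} without any of \cref{SS:burnghost}. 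The paper's route, by contrast, is the natural continuation of its ghost-based computation of $\Spec(\burn_G)$ as a set, and isolates the structural fact $\fr{p}_{H,p}=\fr{p}_{O^p(H),p}$ explicitly along the way.
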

\begin{proof}
As before, it suffices to prove the analogous claims about the $\K_{L, p}$.

The ``if'' direction of (i) is immediate from \Cref{prop:ghost containments}. For the ``only if,'' we show the contrapositive. Suppose $H\not\preccurlyeq_G K$, so the element $b\in \burnghost(G/H)$ given by
\begin{equation}\label{equation: definition of element a}
	b_I = 
	\begin{cases}
    	1 & I = H, \\
	    0 & \text{else},
	\end{cases}
\end{equation}
is in $\P_{K,0}(G/H)$ but not in $\P_{H,0}(G/H)$. Since the ghost map has finite cokernel \cite[Proposition 1.2.3]{tomDieck}, there is some $n \geq 0$ so that $n\cdot b$ is in the image of $\chi_{H}\colon A(H)\to \burnghost(G/H)$. This means that $n\cdot b\in \K_{K,0}(G/H)$ but not in $\K_{H,0}(G/H)$, so then $\K_{K,0}\not\subseteq \K_{H,0}$ as desired.

The first part of (ii) is also immediate from \Cref{prop:ghost containments}. For the second part, we simply observe that $\K_{H,p}(G/e) = (p)\subset \mathbb{Z}$ while $\K_{K,0}(G/e) = (0)\subset \mathbb{Z}$. We have used here that the ghost map is an isomorphism at level $G/e$.

For (iii), we observe that the ``if'' direction is immediate from (i) and \cref{lem:pHp=pOpHp}. To prove the ``only if'' direction, suppose that $\mf{p}_{K,p}\subseteq \mf{p}_{H,q}$. At the $G/e$ level, we obtain $(p)\subseteq (q)$ and hence $p=q$.  Next, observe that since  $\K_{H,p} = \K_{O^p(H),p}$, by \cref{lem:pHp=pOpHp}, we may reduce to case that $K=O^p(K)$ and $H = O^p(H)$ are both $p$-perfect. 

Consider the element $a\in \burnghost(G/G)$ given by\[
    a_I = \begin{cases}
        1 & O^p(I) \sim_G H,\\
        0 & \mathrm{else}.
    \end{cases}
\] Note that by assumption $O^p(H) = H$ and so $a_H=1$. We claim that there is an integer $n>0$ which is coprime to $p$ such that $n\cdot a$ is in the image of the ghost map; this is shown in \cref{lemma: p local Burnside element}. Then the element $n\cdot a$ is not an element in $\mathcal{K}_{H,p}(G/G)$, since $(n\cdot a)_H = n\cdot a_H = n$ is coprime to $p$.  Suppose that $H$ is not subconjugate to $K$.  If $I$ is any subgroup of $K$ then $O^p(I)$ is not conjugate to $H$, as this would imply that $H\sim_G O^p(I)\leq I\leq K$.  Thus we see that $a_I=0$ for all $I\leq K$, and hence $a_I\in \mathcal{K}_{K,p}$.  Since we assumed $\mathcal{K}_{K,p}\subseteq \mathcal{K}_{H,p}$, this is a contradiction, hence we must have $H\preccurlyeq_G K$. 

For (iv), the ``if'' direction follows immediately from (i), (ii), and \cref{lem:pHp=pOpHp}.  The ``only if'' direction is essentially the same proof as (iii), except that we may not assume $K = O^p(K)$.
\end{proof}

\begin{corollary}
    If $G$ is a $p$-group then $\mf{p}_{H,p} = \mf{p}_{K,p}$ for all $H,K\leq G$.  For $q\neq p$ we have $\mf{p}_{H,q} = \mf{p}_{K,q}$ if and only if $H$ and $K$ are conjugate.
\end{corollary}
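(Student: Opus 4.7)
The plan is to deduce this corollary directly from \cref{thm:containment all}(iii) by computing $O^r(H)$ for subgroups $H$ of a $p$-group $G$ for each prime $r$.

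First I would recall that by \cref{thm:containment all}(iii), the equality $\fr{p}_{H,r} = \fr{p}_{K,r}$ holds if and only if $O^r(H) \preccurlyeq_G O^r(K)$ and $O^r(K) \preccurlyeq_G O^r(H)$, which (since subconjugacy between subgroups of the same order forces conjugacy, and since $|O^r(H)|$ is determined as soon as we know what $O^r$ computes on a given subgroup) is equivalent to $O^r(H)$ and $O^r(K)$ being conjugate in $G$.

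For the first claim, specialize to $r = p$ with $G$ a $p$-group. By \cref{example: Op for p-groups}, every subgroup $H \leq G$ is itself a $p$-group, so $O^p(H) = e$. Hence $O^p(H) = O^p(K) = e$ for all $H, K \leq G$, and these are (trivially) conjugate, yielding $\fr{p}_{H,p} = \fr{p}_{K,p}$.

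For the second claim, specialize to $r = q \neq p$ with $G$ a $p$-group. Then every subgroup $H \leq G$ has order a power of $p$, so $q \nmid |H|$, and by \cref{example: Oq for for not q groups} we have $O^q(H) = H$. Therefore $\fr{p}_{H,q} = \fr{p}_{K,q}$ if and only if $H$ and $K$ are conjugate in $G$, as claimed. No serious obstacle arises here — the entire content is bookkeeping with the two examples \cref{example: Op for p-groups,example: Oq for for not q groups} plugged into the containment criterion of \cref{thm:containment all}.
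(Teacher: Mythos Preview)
Your proposal is correct and follows essentially the same approach as the paper's proof: both deduce the corollary from \cref{thm:containment all}(iii) by observing that $O^p(H)=e$ for every subgroup of a $p$-group and $O^q(H)=H$ when $q\neq p$. Your version is slightly more explicit in citing \cref{example: Op for p-groups,example: Oq for for not q groups} and in justifying why mutual subconjugacy forces conjugacy, but the argument is the same.
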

\begin{proof}
    It is immediate from (iii) above that $\mf{p}_{H,p} = \mf{p}_{K,p}$ if and only if $O^p(H)$ and  $O^p(K)$ are conjugate. For the first claim, since $H$ and $K$ are both $p$-groups both of the $p$-residual subgroups are the trivial group (\Cref{example: Op for p-groups}), hence are equal.  For the second claim, note that we have $O^q(A)=A$ for all subgroups $A\leq G$ (\Cref{example: Oq for for not q groups}).
\end{proof}

The Krull dimension of a Tambara functor is defined similarly to the Krull dimension of a commutative ring \cite[Def. 8.1]{CMQSV:24}. The results above imply that the Krull dimension of $\burn_G$ is always one more than the length of the longest chain in the poset of conjugacy classes of subgroups of $G$; the zero ideal $\mf{p}_{e,0}$ is contained in every other prime $\mf{p}_{H,p}$ and accounts for the plus one.

\begin{proposition}
    For any finite group $G$, there is a continuous bijection
    \[\Spec(A(G)) \to \Spec(\uA_G),\]
   but $\Spec(A(G))$ and $\Spec(\uA_G)$ are not homeomorphic unless $G$ is the trivial group.
\end{proposition}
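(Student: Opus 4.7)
The plan is to exhibit the evident set-theoretic bijection between the two spectra and then verify, via the containment theorems already established, that it is order-preserving in one direction but not the other.

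First I would identify both spectra with the same indexing set. Dress's theorem (\cref{thm:spec burn ring}), together with the equalities in \cref{thm:dress containment}, shows that $\Spec(A(G))$ is in bijection with pairs $(H,p)$ where $H$ ranges over conjugacy classes of subgroups (if $p=0$) or conjugacy classes of $p$-residual subgroups $O^p(H)$ (if $p$ is prime). By \cref{main theorem} and \cref{thm:containment all}, $\Spec(\uA_G)$ has exactly the same indexing. This defines a canonical bijection $\Phi\colon \ker \varphi_{G,p}^H \mapsto \mf{p}_{H,p}$.

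Next I would reduce continuity to an order-preservation check. The Burnside ring $A(G)$ is a finitely generated $\ZZ$-algebra and $\uA_G$ is levelwise Noetherian, so both spectra are finite; the topology on each is then determined by the specialization order (for $\Spec(\uA_G)$, this uses \cite[Thm.~A]{CMQSV:24}, as cited after \cref{main theorem}). Thus a bijection is continuous iff it preserves containment, and is a homeomorphism iff it is an order isomorphism. To see that $\Phi$ is continuous, I would compare \cref{thm:dress containment} to \cref{thm:containment all} case by case: the only containments in the Zariski spectrum arise in (i), (iii), and (iv) of \cref{thm:dress containment}, and in each case the corresponding containment in $\Spec(\uA_G)$ is immediate from \cref{thm:containment all}, because $H\sim_G K$ implies $K\preccurlyeq_G H$ and $O^p(H)\sim_G O^p(K)$ implies $O^p(K)\preccurlyeq_G O^p(H)$.

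Finally, to show $\Phi$ is not a homeomorphism when $G$ is nontrivial, I would exhibit a single containment in $\Spec(\uA_G)$ with no counterpart in $\Spec(A(G))$. The cleanest witness is: for any proper subgroup $H<G$, \cref{thm:containment all}(i) gives $\mf{p}_{G,0}\subsetneq \mf{p}_{H,0}$ since $H\preccurlyeq_G G$ but $H$ is not conjugate to $G$; however \cref{thm:dress containment}(i) shows that $\ker\varphi_{G,0}^G$ and $\ker\varphi_{G,0}^H$ are incomparable in $\Spec(A(G))$. Hence $\Phi^{-1}$ is not order-preserving, and so not continuous. For $G=\{e\}$, both rings are $\ZZ$ and $\Phi$ is just the identity on $\Spec(\ZZ)$. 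I do not anticipate a serious obstacle here: the main theorem and its containment analysis in \cref{thm:containment all} do essentially all of the work, so this statement is really a corollary.
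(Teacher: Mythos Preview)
Your overall strategy matches the paper's: exhibit the bijection $\ker\varphi_{G,p}^H\mapsto\mf{p}_{H,p}$, verify it is order-preserving by comparing \cref{thm:dress containment} with \cref{thm:containment all}, and then argue separately that the two spaces cannot be homeomorphic. However, there are two genuine gaps.

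First, your justification for continuity is incorrect. You claim that both spectra are finite, but they are not: $\Spec(A(G))$ and $\Spec(\uA_G)$ each contain $\mf{p}_{H,p}$ for every prime number $p$, so they are infinite as soon as $G$ has any subgroup (in particular always). Consequently the reduction ``continuous $\Leftrightarrow$ order-preserving'' via finiteness does not apply. The paper handles this by invoking \cite[Theorem~2.24]{CMQSV:24}, which supplies exactly the implication ``poset map between Noetherian spectra $\Rightarrow$ continuous'' that you need; your order-preservation check is fine, only the topological conclusion needs the correct citation rather than the finiteness claim.

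Second, and more seriously, your non-homeomorphism argument only shows that the particular bijection $\Phi$ fails to be a homeomorphism (because $\Phi^{-1}$ does not preserve specialization). The proposition asserts that \emph{no} homeomorphism exists, so you must rule out every possible bijection, not just $\Phi$. The paper does this by comparing Krull dimensions: from the chain $\mf{p}_{e,0}\subsetneq\mf{p}_{G,0}\subsetneq\mf{p}_{G,q}$ (for $q\nmid\abs{G}$) one sees $\dim\Spec(\uA_G)\geq 2$, whereas $\dim\Spec(A(G))=1$ by \cref{thm:dress containment}. Since Krull dimension is a homeomorphism invariant, this finishes the argument. Your witness $\mf{p}_{G,0}\subsetneq\mf{p}_{H,0}$ is essentially the same phenomenon---you have found a nontrivial containment among height-zero primes---so extending it by one more step to $\mf{p}_{H,0}\subsetneq\mf{p}_{H,p}$ and phrasing the conclusion in terms of dimension would repair the gap.
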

\begin{proof}
    We have established that
    \[\ker \varphi_{G,p}^H \mapsto \mf{p}_{H,p} : \Spec(A(G)) \to \Spec(\uA_G)\]
    is a bijection. By \Cref{thm:containment all} and \Cref{thm:dress containment}, this is a morphism of posets. Since $A(G)$ is a Noetherian ring and $\uA_G$ is a Noetherian Tambara functor, this function is continuous by \cite[Theorem 2.24]{CMQSV:24}.

    To see that $\Spec(A(G))$ and $\Spec(\uA_G)$ are not homeomorphic unless $G$ is trivial, let $q$ be any prime number not dividing the order of $G$ and suppose $G \neq e$. Then we have proper containments
    \[\mf{p}_{e,0} \subset \mf{p}_{G,0} \subset \mf{p}_{G,q},\]
    demonstrating that the Krull dimension of $\uA_G$ is at least $2$. On the other hand, the Krull dimension of $A(G)$ is $1$.
\end{proof}

\section{Examples}\label{section:examples}

In this section we give explicit descriptions of $\Spec(\uA_G)$ for various groups $G$.  We focus on non-abelian groups since these are examples which have not previously appeared in the literature.  To describe these spaces, we note that there is a poset map $\Spec(\uA_G)\to \Spec(\mathbb{Z})$ given by $\mf{p}_{H,p}\mapsto (p)$ for $p \in \ZZ$ a prime or $0$.  We describe the poset $\Spec(\uA_G)$ by describing the fiber of this map over every prime $(p)$.  Property (iii) from \Cref{thm:containment all} implies there are no containments between prime ideals which live in fibers over distinct primes, and properties (ii) and (iv) describe possible containments between the fiber over $(0)$ and the fibers over primes.

We will frequently make use of \Cref{example: Op for p-groups} and \Cref{example: Oq for for not q groups}, which say that if $G$ is a $p$-group, then $O^p(G) = e$ and $O^q(G) = G$ for $q \neq p$, and if $p$ is any prime which does not divide $|G|$, then $O^p(G)=G$. In particular, \Cref{example: Oq for for not q groups} implies that condition (iii) in \Cref{thm:containment all} becomes $\mf{p}_{H,p}\subset \mf{p}_{K,p}$ if and only if $K\preccurlyeq_G H$.  Thus, at any generic prime $p$ the fiber is simply the opposite of the subconjugacy poset, i.e.\ the poset obtained by taking orbits of the subgroup poset by the conjugation action.  

In all our examples we begin by displaying the subconjugacy poset, and then  display the poset corresponding to primes which do divide the group order. If $x,y$ are elements of a poset $P$ we write $x\to y$ to indicate that $x\leq y$.

\begin{example}
    Let $G = D_{p^n}$ be the dihedral group of order $2p^n$ where $p\neq 2$ is a prime.
    The fiber over a generic prime $q\neq 2,p$ is the opposite of the subconjugacy poset:
\[\begin{tikzcd}
	{\mf{p}_{D_1,q}} & {\mf{p}_{D_p,q}} & \dots & {\mf{p}_{D_{p^{n-1}},q}} & {\mf{p}_{D_{p^n},q}} \\
	{\mf{p}_{e,q}} & {\mf{p}_{C_{p},q}} & \dots & {\mf{p}_{C_{p^{n-1}},q}} & {\mf{p}_{C_{p^n},q}} .
	\arrow[to=1-1, from=1-2]
	\arrow[to=1-2, from=1-3]
	\arrow[to=1-3, from=1-4]
	\arrow[to=1-4, from=1-5]
	\arrow[to=2-1, from=1-1]
	\arrow[to=2-1, from=2-2]
	\arrow[to=2-2, from=1-2]
	\arrow[to=2-2, from=2-3]
	\arrow[to=2-3, from=1-3]
	\arrow[to=2-3, from=2-4]
	\arrow[to=2-4, from=1-4]
	\arrow[to=2-4, from=2-5]
	\arrow[to=2-5, from=1-5]
\end{tikzcd}\]
The fiber over $q=2$ is controlled by the computations of $O^2$ of subgroups of $D_{p^n}$.  Since $C_{p^k}$ is a $p$-group, and $p\neq 2$, we have $O^2(C_{p^k}) = C_{p^k}$ for all $k$.  On the other hand we have, by \Cref{example: O of Dpn}, that $O^2(D_{p^k}) = C_{p^k}$ as well so the fiber over $q=2$ is a line
\[\begin{tikzcd}
	{\mf{p}_{e,2}} & {\mf{p}_{C_{p},2}} & \dots & {\mf{p}_{C_{p^{n-1}},2}} & {\mf{p}_{C_{p^n},2}} 
	\arrow[to=1-1, from=1-2]
	\arrow[to=1-2, from=1-3]
	\arrow[to=1-3, from=1-4]
	\arrow[to=1-4, from=1-5]
\end{tikzcd}.\]
For the fiber over $q=p$ we have that $O^p(C_{p^k})=e$ and $O^p(D_{p^k}) = D_{p^k}$ for all $k$.  Thus the fiber over $q=p$ is 
\[\begin{tikzcd}
	{\mf{p}_{e,p}} & {\mf{p}_{D_1,p}} & {\mf{p}_{D_{p},p}} & \dots & {\mf{p}_{D_{p^{n-1}},p}} & {\mf{p}_{D_{p^n},p}}
	\arrow[to=1-1, from=1-2]
	\arrow[to=1-2, from=1-3]
	\arrow[to=1-3, from=1-4]
	\arrow[to=1-4, from=1-5]
	\arrow[to=1-5, from=1-6]
\end{tikzcd}.\]
Note that while the fibers over $q=2$ and $q=p$ are both lines, the lengths of these lines differ by one.
\end{example}

\begin{example}
    Let $G = Q_8$.  The fiber over a generic prime $p\neq 2$ is the opposite of the subgroup lattice:
\[\begin{tikzcd}
	& {\mf{p}_{Q_8,p}} \\
	{\mf{p}_{C_4,p}} & {\mf{p}_{C_4,p}} & {\mf{p}_{C_4,p}} \\
	& {\mf{p}_{C_2,p}} \\
	& {\mf{p}_{e,p}}.
	\arrow[to=2-1, from=1-2]
	\arrow[to=2-2, from=1-2]
	\arrow[to=2-3, from=1-2]
	\arrow[to=3-2, from=2-1]
	\arrow[to=3-2, from=2-2]
	\arrow[to=3-2, from=2-3]
	\arrow[to=4-2, from=3-2]
\end{tikzcd}\]
Since $G$ is a $2$-group the fiber over $p=2$ is a single point (\Cref{example: Op for p-groups}). Completely analogous arguments handle the generalized quaternion case $G = Q_{4n}$ for all $n \in \mathbb{N}$. 
\end{example}

\begin{example}
    Let $G = A_4$ be the alternating group with $12$ elements. The fiber over a generic prime $q\neq 2,3$ is the opposite of the subconjugacy poset
\[\begin{tikzcd}
	& {\mf{p}_{A_4,q}} \\
	{\mf{p}_{K_4,q}} && {\mf{p}_{C_3,q}} \\
	{\mf{p}_{C_2,q}} \\
	& {\mf{p}_{e,q}}.
	\arrow[to=2-1, from=1-2]
	\arrow[to=2-3, from=1-2]
	\arrow[to=3-1, from=2-1]
	\arrow[to=4-2, from=2-3]
	\arrow[to=4-2, from=3-1]
\end{tikzcd}\]
For the fiber over the prime $q=2$ we have $O^2(K_4) = O^2(C_2) = e$, $O^2(A_4) = A_4$, and $O^2(C_3) = C_3$.  Thus the fiber over $q=2$ is the line
\[\begin{tikzcd}
	{\mf{p}_{e,2}} & {\mf{p}_{C_3,2}} & {\mf{p}_{A_4,2}}
	\arrow[to=1-1, from=1-2]
	\arrow[to=1-2, from=1-3]
\end{tikzcd}.\]
For the fiber over the prime $q=3$ we have $O^3(K_4) =K_4$, $ O^3(C_2) = C_2$, $O^3(A_4) = K_4$ and $O^2(C_3) = 3$.  Thus the fiber over $q=2$ is the line
\[\begin{tikzcd}
	{\mf{p}_{e,3}} & {\mf{p}_{C_2,3}} & {\mf{p}_{K_4,3}}
	\arrow[to=1-1, from=1-2]
	\arrow[to=1-2, from=1-3]
\end{tikzcd}.\]
\end{example}

\begin{example}
    We end with our most complicated example $G = \mathrm{PSL}_2(\mathbb{F}_7)\cong \mathrm{GL}_3(\mathbb{F}_2)$.  This group is notable for being the automorphism group of the Fano plane.  It is also the second smallest non-abelian finite simple group.  The order of this group is $|G| = 168 = 2^3\cdot 3\cdot 7$.  The subconjugacy lattice can be found in the GroupNames database \cite{GroupNames}; see also \cite[pp. 207-212]{DF91}. The fiber over the generic prime $p\neq 2,3,7$ is
\[\begin{tikzcd}[row sep=1.1cm]
	&&& {\mf{p}_{G,p}} \\
	& {\mf{p}_{C_7\rtimes C_3,p}} && {\mf{p}_{S_4^a,p}} & {\mf{p}_{S_4^b,p}} \\
	&& {\mf{p}_{S_3,p}} & {\mf{p}_{A_4^a,p}} & {\mf{p}_{D_4,p}} & {\mf{p}_{A_4^b,p}} \\
	{\mf{p}_{C_7,p}} & {\mf{p}_{C_3,p}} && {\mf{p}_{C_4,p}} & {\mf{p}_{K_4^a,p}} & {\mf{p}_{K_4^b,p}} \\
	&&& {\mf{p}_{C_2,p}} \\
	&&& {\mf{p}_{e,p},}
	\arrow[from=1-4, to=2-2]
	\arrow[from=1-4, to=2-4]
	\arrow[from=1-4, to=2-5]
	\arrow[from=2-2, to=4-1]
	\arrow[from=2-2, to=4-2]
	\arrow[from=2-4, to=3-3]
	\arrow[from=2-5, to=3-5]
	\arrow[from=2-5, to=3-6]
	\arrow[from=3-6, to=4-6]
	\arrow[curve={height=24pt}, from=4-1, to=6-4]
	\arrow[from=4-2, to=6-4]
	\arrow[from=4-4, to=5-4]
	\arrow[from=4-5, to=5-4]
	\arrow[from=4-6, to=5-4]
	\arrow[from=5-4, to=6-4]
        \arrow[from=2-4, to=3-5]
        \arrow[from=2-5, to=3-3]
        \arrow[curve={height=-10pt}, from=3-6, to=4-2]
        \arrow[from=3-3, to=4-2, crossing over]
    \arrow[from=3-4, to=4-2, crossing over]        
	\arrow[from=3-3, to=4-4, crossing over]
	\arrow[from=3-3, to=5-4, crossing over]
	\arrow[from=3-4, to=4-5, crossing over]
        \arrow[from=3-5, to=4-4, crossing over]
	\arrow[from=3-5, to=4-5, crossing over]
	\arrow[from=3-5, to=4-6, crossing over]
        \arrow[crossing over, from=2-4, to=3-4]
        \arrow[crossing over, from=2-4, to=3-5]
\end{tikzcd}\]
where we have written $S^a_{4}$ and $S^b_{4}$ to differentiate between the two non-conjugate copies of $S_4$, and similarly for $A_4^a$, $A_4^b$, $K_4^a$ and $K_4^b$. The following table displays the values of $O^q(H)$ for $p=2,3,7$ and $H\leq G$.
\begin{center}
\begin{tabular}{ |c||c|c|c| } 
 \hline
 $H\leq G$ & $O^2(H)$ & $O^3(H)$ & $O^7(H)$ \rule{0pt}{2.2ex} \\ \hline \hline
 $C_7\rtimes C_3$ & $C_7\rtimes C_3$ & $C_7$ & $C_7\rtimes C_3$ \\ \hline
 $S_4^a$ & $A_4^a$ & $S_4^a$ &  $S_4^a$\\ \hline
 $S_4^b$ & $A_4^b$ & $S_4^b$ &  $S_4^b$\\ \hline
 $C_7$ & $C_7$ & $C_7$ & $e$ \\ \hline
 $S_3$ & $C_3$ & $S_3$  & $S_3$\\ \hline
 $A_4^a$ & $A_4^a$ & $K_4^a$ & $A_4^a$ \\ \hline
 $A_4^b$ & $A_4^b$ & $K_4^b$ & $A_4^b$ \\ \hline
 $D_4$ & $e$ & $D_4$ &  $D_4$ \\ \hline
 $C_3$ & $C_3$ & $e$  & $C_3$ \\ \hline
 $C_4$ & $e$ & $C_4$ &  $C_4$ \\ \hline
 $K_4^a$ & $e$ & $K_4^a$  & $K_4^a$ \\ \hline
 $K_4^b$ & $e$ & $K_4^b$ & $K_4^b$\\ \hline
 $C_2$ & $e$ & $C_2$  & $C_2$ \\ \hline
 $e$ & $e$ & $e$ & $e$ \\ \hline
\end{tabular}
\end{center}

\noindent
The fiber over $p=2$ is 
\[\begin{tikzcd}
	& {\mf{p}_{G,2}} \\
	{\mf{p}_{C_7\rtimes C_3,2}} & {\mf{p}_{A_4^a,2}} & {\mf{p}_{A_4^b,2}} \\
	{\mf{p}_{C_7,2}} & {\mf{p}_{C_3,2}} \\
	& {\mf{p}_{e,2},}
	\arrow[from=1-2, to=2-1]
	\arrow[from=1-2, to=2-2]
	\arrow[from=1-2, to=2-3]
	\arrow[from=2-1, to=3-1]
    \arrow[from=2-1, to=3-2]
	\arrow[from=2-2, to=3-2]
	\arrow[from=2-3, to=3-2]
	\arrow[from=3-1, to=4-2]
	\arrow[from=3-2, to=4-2]
\end{tikzcd}\]
the fiber over $p=3$ is 
\[\begin{tikzcd}
	&& {\mf{p}_{G,3}} \\
	&& {\mf{p}_{S_4^a,3}} && {\mf{p}_{S_4^b,3}} \\
	{\mf{p}_{C_7,3}} & {\mf{p}_{S_3,3}} && {\mf{p}_{D_4,3}} \\
	&& {\mf{p}_{K_4^a,3}} & {\mf{p}_{C_4,3}} & {\mf{p}_{K_4^b,3}} \\
	&& {\mf{p}_{C_2,3}} \\
	&& {\mf{p}_{e,3},}
	\arrow[from=1-3, to=2-3]
	\arrow[from=1-3, to=2-5]
	\arrow[from=1-3, to=3-1]
	\arrow[from=2-3, to=3-2]
	\arrow[from=2-3, to=3-4]
	\arrow[from=2-5, to=3-4]
	\arrow[from=3-1, to=6-3]
	\arrow[from=3-2, to=5-3]
	\arrow[from=3-4, to=4-3]
	\arrow[from=3-4, to=4-4]
	\arrow[from=3-4, to=4-5]
	\arrow[from=4-3, to=5-3]
	\arrow[from=4-4, to=5-3]
	\arrow[from=4-5, to=5-3]
	\arrow[from=5-3, to=6-3]
        \arrow[crossing over,from=2-5, to=3-2]
\end{tikzcd}\]
and the fiber over $p=7$ is 
\[\begin{tikzcd}[row sep=1.1cm]
	&&& {\mf{p}_{G,7}} \\
	& {\mf{p}_{C_7\rtimes C_3,7}} && {\mf{p}_{S_4^a,7}} & {\mf{p}_{S_4^b,7}} \\
	&& {\mf{p}_{S_3,7}} & {\mf{p}_{A_4^a,7}} & {\mf{p}_{D_4,7}} & {\mf{p}_{A_4^b,7}} \\
	 & {\mf{p}_{C_3,7}} && {\mf{p}_{C_4,7}} & {\mf{p}_{K_4^a,7}} & {\mf{p}_{K_4^b,7}} \\
	&&& {\mf{p}_{C_2,7}} \\
	&&& {\mf{p}_{e,7}.}
		\arrow[from=1-4, to=2-2]
	\arrow[from=1-4, to=2-4]
	\arrow[from=1-4, to=2-5]
	\arrow[from=2-2, to=4-2]
	\arrow[from=2-4, to=3-3]
	\arrow[from=2-5, to=3-5]
	\arrow[from=2-5, to=3-6]
	\arrow[from=3-6, to=4-6]
	\arrow[from=4-2, to=6-4]
	\arrow[from=4-4, to=5-4]
	\arrow[from=4-5, to=5-4]
	\arrow[from=4-6, to=5-4]
	\arrow[from=5-4, to=6-4]
        \arrow[from=2-4, to=3-5]
        \arrow[from=2-5, to=3-3]
        \arrow[curve={height=-10pt}, from=3-6, to=4-2]
        \arrow[from=3-3, to=4-2, crossing over]
    \arrow[from=3-4, to=4-2, crossing over]        
	\arrow[from=3-3, to=4-4, crossing over]
	\arrow[from=3-3, to=5-4, crossing over]
	\arrow[from=3-4, to=4-5, crossing over]
        \arrow[from=3-5, to=4-4, crossing over]
	\arrow[from=3-5, to=4-5, crossing over]
	\arrow[from=3-5, to=4-6, crossing over]
        \arrow[crossing over, from=2-4, to=3-4]
        \arrow[crossing over, from=2-4, to=3-5]
\end{tikzcd}
\]
\end{example}

\printbibliography

\end{document}